\documentclass{amsart}

\usepackage{graphicx}
\usepackage{overpic}

\newtheorem{theorem}{Theorem}[section]

\newtheorem{proposition}[theorem]{Proposition}
\newtheorem{lemma}[theorem]{Lemma}

\theoremstyle{remark}
\newtheorem{remark}{Remark}[section]

\theoremstyle{definition}

\begin{document}

\title{Two-bridge knots admit no purely cosmetic surgeries}

\author{Kazuhiro Ichihara}
\address{Department of Mathematics, College of Humanities and Sciences, Nihon University, 3-25-40 Sakurajosui, Setagaya-ku, Tokyo 156-8550, JAPAN}
\email{ichihara.kazuhiro@nihon-u.ac.jp}

\author{In Dae Jong}
\address{Department of Mathematics, Kindai University, 3-4-1 Kowakae, Higashiosaka City, Osaka 577-0818, Japan} 
\email{jong@math.kindai.ac.jp}

\author{Thomas W.\ Mattman}
\address{Department of Mathematics and Statistics, California State University, Chico, Chico, CA 95929-0525}
\email{TMattman@CSUChico.edu}

\author{Toshio Saito}
\address{Department of Mathematics, Joetsu University of Education, 1 Yamayashiki, Joetsu 943-8512, JAPAN}
\email{toshio@juen.ac.jp}

\dedicatory{Dedicated to Professor Hitoshi Murakami on his 60th birthday.}

\subjclass[2010]{Primary 57M27, Secondary 57M25}

\keywords{alternating knot, fibered knot, two-bridge knot, pretzel knot, cosmetic surgery, signature, finite type invariant, $SL(2,\mathbb{C})$ Casson invariant}

\begin{abstract}
We show that two-bridge knots and alternating fibered knots admit no purely cosmetic surgeries, i.e., no pair of distinct Dehn surgeries on such a knot produce 3-manifolds that are homeomorphic as oriented manifolds. 
Our argument, based on a recent result by Hanselman, uses several invariants of knots or 3-manifolds; 
for knots, we study the signature and some finite type invariants, and 
for 3-manifolds, we deploy the $SL(2,\mathbb{C})$ Casson invariant.
\end{abstract}

%\pagewiselinenumbers

\maketitle

\section{Introduction}

A pair of Dehn surgeries are said to be \emph{purely cosmetic} 
if the two surgeries yield 3-manifolds that admit
an orientation-preserving homeomorphism. 
The Cosmetic Surgery Conjecture states that no nontrivial 
knot in $S^3$ admits purely cosmetic surgeries along inequivalent slopes. 
See \cite[Problem 1.81(A)]{Kirby} for further information and a more precise formulation.
In this paper, we confirm the conjecture for two-bridge knots and alternating fibered knots.

\begin{theorem}\label{Thm2-bridge}
Two-bridge knots admit no purely cosmetic surgeries. 
\end{theorem}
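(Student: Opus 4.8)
The plan is to combine Hanselman's strong restriction on the admissible slopes with the vanishing of low-degree invariants forced by a purely cosmetic pair, and then to eliminate the few surviving families of two-bridge knots by direct computation. By Hanselman's theorem, if a knot $K$ admitted a purely cosmetic pair the two slopes would be opposite, either $\{2,-2\}$ with $g(K)=2$ or $\{1/q,-1/q\}$ for some integer $q\ge 1$. On the other hand, comparing the Casson--Walker invariants of the two surgered manifolds (Boyer--Lines) shows that a purely cosmetic pair forces
\[
a_2(K)=\tfrac12\,\Delta_K''(1)=0 .
\]
Since every two-bridge knot is alternating, its Alexander polynomial has nonzero, strictly alternating coefficients and breadth $2g(K)$, so $a_2(K)$ is an explicit, sign-controlled sum of these coefficients. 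For genus-one two-bridge knots that sum reduces to a single nonzero coefficient, so these knots are eliminated at once; the problem thus reduces to two-bridge knots of genus at least two with $a_2(K)=0$.

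Next I would fix the standard continued-fraction parametrization of two-bridge knots and record the invariants the remaining cases require: the signature $\sigma(K)$ and the degree-three finite type invariant $v_3(K)$, both readable from the polynomial data, together with the $SL(2,\mathbb{C})$ character variety via Riley's polynomial, which feeds the Boden--Curtis surgery formula for the $SL(2,\mathbb{C})$ Casson invariant $\lambda_{SL(2,\mathbb{C})}$. The point is that imposing $a_2=0$ cuts the two-bridge knots down to a family described by a few continued-fraction parameters, on which these further invariants can be evaluated uniformly.

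For slopes $\{1/q,-1/q\}$ the surgered manifolds $S^3_{1/q}(K)$ and $S^3_{-1/q}(K)$ are integral homology spheres, and here I would invoke the further necessary vanishing condition $v_3(K)=0$ together with $\lambda_{SL(2,\mathbb{C})}$: the plan is either to show that a nontrivial two-bridge knot cannot satisfy $a_2(K)=v_3(K)=0$ simultaneously, or, failing a clean contradiction there, to show via Boden--Curtis that $\lambda_{SL(2,\mathbb{C})}(S^3_{1/q}(K))\neq\lambda_{SL(2,\mathbb{C})}(S^3_{-1/q}(K))$. For the remaining slopes $\{2,-2\}$, which occur only in genus two, I would treat the still-infinite family left by $a_2=0$ using the signature as a uniform obstruction, with $\lambda_{SL(2,\mathbb{C})}$ of $S^3_{\pm 2}(K)$ as a backstop.

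The main obstacle I expect is this last step: turning the character-variety data of a two-bridge knot into an effective, sign-sensitive value of $\lambda_{SL(2,\mathbb{C})}$ for the $\pm$-surgeries, and proving that it obstructs the homeomorphism across an \emph{entire} infinite family rather than merely on sampled knots. Controlling $a_2$ and $v_3$ simultaneously along the continued-fraction parametrization and extracting $\sigma(K)$ in closed form are comparatively routine; the delicate point is securing that one of these invariants obstructs uniformly in the surgery parameter $q$ and in the infinitely many genus-two candidates with $a_2=0$, rather than case by case.
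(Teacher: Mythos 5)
Your toolkit is the same as the paper's (Hanselman's slope/genus/signature restrictions, the signature, finite type invariants, and the Boden--Curtis surgery formula for $\lambda_{SL(2,\mathbb{C})}$), but as written the plan has gaps that are not merely technical. First, the signature is \emph{not} a uniform obstruction for the genus-two candidates: after imposing Hanselman's Alexander polynomial condition (via Ichihara--Wu this pins the knot down to the continued fractions $[2x,2y,-2(x+y),2x]$), there remains an infinite subfamily, namely $y<0$ with $x+y>0$, on which $\sigma(K)=0$; the signature only kills the other two sign patterns. So your $\{2,-2\}$ branch cannot be closed by $\sigma$, and for alternating knots the $\{1/q,-1/q\}$ branch also collapses to $\pm1$ with $g=2$, so there is no genuinely separate genus-one case to dispose of.

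Second, and more seriously, both of your proposed finishing tools degenerate exactly on the subfamily that survives everything else. The $SL(2,\mathbb{C})$ Casson comparison computes to $S_--S_+=2(x+2y)(4x^2-6x+5)$, which vanishes precisely when $x=-2y$, i.e.\ on the amphichiral knots $[4n,-2n,-2n,4n]$; and $v_3(K)=0$ automatically for amphichiral knots, so the hoped-for contradiction from $a_2(K)=v_3(K)=0$ is unavailable (these knots satisfy both). The paper has to bring in Ito's degree-$4$ and degree-$6$ LMO constraint, together with an explicit computation of the Conway and Jones polynomials of $C[4n,-2n,-2n,4n]$, to eliminate this last family; nothing in your proposal reaches that far. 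Finally, the step you flag as ``the main obstacle''---making $\lambda_{SL(2,\mathbb{C})}$ effective and sign-sensitive across the whole family---is precisely the core of the argument (an enumeration of all boundary-slope continued fractions \`a la Mattman--Maybrun--Robinson), so deferring it leaves the proof essentially unexecuted.
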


\begin{theorem}\label{ThmFibered}
Alternating fibered knots admit no purely cosmetic surgeries. 
\end{theorem}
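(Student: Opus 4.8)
The plan is to argue by contradiction: suppose a nontrivial alternating fibered knot $K$ of genus $g$ admits purely cosmetic surgeries, and extract enough numerical constraints on the classical invariants of $K$ to reach a contradiction. The first move is to invoke Hanselman's theorem, which pins the two surgery slopes down to one of two very rigid possibilities: either the slopes are $2$ and $-2$ with $g=2$, or they are $1/q$ and $-1/q$ for some integer $q\ge 1$. In particular the surgery numerator is $p\in\{1,2\}$, and the cosmetic pair consists of $S^3_{p/q}(K)$ and $S^3_{-p/q}(K)$.

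Next I would feed this into the Casson--Walker surgery formula (Boyer--Lines). Writing $\lambda$ for the Casson--Walker invariant, one has $\lambda(S^3_{p/q}(K))=\tfrac{q}{2p}\Delta_K''(1)+\lambda(L(p,q))$, while $\lambda$ reverses sign under orientation reversal and $L(p,-q)=-L(p,q)$. Equating the values at the two slopes, the lens-space terms cancel and one is left with $\tfrac{q}{p}\Delta_K''(1)+2\lambda(L(p,q))=0$. Since $L(1,q)=S^3$ and $\mathbb{RP}^3=L(2,1)$ are amphichiral, $\lambda(L(p,q))=0$ for $p\in\{1,2\}$, so in both of Hanselman's cases the identity collapses to $\Delta_K''(1)=0$; equivalently the second coefficient $a_2(K)=\tfrac12\Delta_K''(1)$ of the Conway polynomial vanishes.

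Now I would exploit the hypotheses on $K$. Because $K$ is fibered, $\Delta_K$ is monic of degree $2g$; because $K$ is alternating, the Crowell--Murasugi theory guarantees that the coefficients of $\Delta_K$ are nonzero and strictly alternate in sign. Writing $\Delta_K(t)=\sum_{i=-g}^{g}c_it^{i}$ with $c_g=1$ and $\mathrm{sign}(c_i)=(-1)^{g-i}$, a short computation gives $a_2(K)=\sum_{i=1}^{g}i^2c_i$. For $g=1$ this forces $a_2=c_1=\pm1\neq0$, immediately excluding genus one. For larger $g$, however, $a_2=0$ is achievable (for instance $\Delta_K=t^2-4t+7-4t^{-1}+t^{-2}$ in the genus-two case), so the Alexander polynomial alone cannot finish the argument and a finer, degree-three obstruction is needed.

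The final, and hardest, step is to rule out the surviving cases. Here I would combine the signature $\sigma(K)$ with the degree-three finite type invariant $v_3(K)$, the latter accessed on the $3$-manifold side through the $SL(2,\mathbb{C})$ Casson invariant: applying the Boden--Curtis surgery formula for $\lambda_{SL(2,\mathbb{C})}(S^3_{p/q}(K))$ to the cosmetic pair should yield a second vanishing constraint, which for alternating fibered knots I would convert into an explicit relation among $\sigma(K)$, $a_2(K)$ and $v_3(K)$. The claim to be proved is that for a nontrivial alternating fibered knot this relation is incompatible with $a_2(K)=0$, the genus-two slope-$\pm2$ case --- where $\Delta_K$ is rigidly determined as above --- being handled separately by evaluating $\lambda_{SL(2,\mathbb{C})}$ (equivalently $v_3$) on the single candidate. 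Making the $v_3$/signature incompatibility precise, and disposing of the genus-two exceptional case, is where essentially all of the difficulty lies; by contrast the reductions via Hanselman and Casson--Walker are formal.
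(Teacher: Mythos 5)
Your reductions via Hanselman and Boyer--Lines are sound as far as they go (and the $a_2(K)=0$ consequence is indeed Proposition 5.1 of Boyer--Lines, quoted in the paper), but the proof stops exactly where the theorem begins. The entire content is deferred to a hypothetical ``relation among $\sigma(K)$, $a_2(K)$ and $v_3(K)$'' extracted from the $SL(2,\mathbb{C})$ Casson invariant that would be incompatible with $a_2(K)=0$ for alternating fibered knots; no such relation is exhibited, and none is known. The Boden--Curtis surgery formula for $\lambda_{SL(2,\mathbb{C})}$ is specific to two-bridge knots (it is phrased in terms of their boundary slopes), so it cannot be applied to a general alternating fibered knot, and $\lambda_{SL(2,\mathbb{C})}$ is not governed by $v_3$. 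You also under-use Hanselman's theorem: beyond the slope constraints, it forces $g(K)=2$ in \emph{all} cases (since the thickness of an alternating knot vanishes) and, more importantly, it pins down the Alexander polynomial to $\Delta_K(t)=nt^2-4nt+(6n+1)-4nt^{-1}+nt^{-2}$ for some $n>0$. For a fibered knot $\Delta_K$ is monic, so $n=1$ and $\Delta_K=t^2-4t+7-4t^{-1}+t^{-2}$ --- precisely the polynomial you wrote down as a mere ``example'' showing $a_2=0$ is achievable. The actual question is therefore whether an alternating fibered knot with this Alexander polynomial and $\sigma=0$ exists.

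The paper answers this by classification rather than by invariant identities: by Stoimenow, every prime alternating genus-two knot with $\sigma=0$ arises from one of $6_3$, $7_7$, $8_{12}$, $9_{41}$, $10_{58}$, $12_{1202}$ by repeated $\bar{t'_2}$ moves (and the non-prime ones from $3_1\sharp 3_1^*$, $4_1\sharp 4_1$); by Jong, a $\bar{t'_2}$ move strictly increases the absolute values of the Alexander coefficients, so monicity confines $K$ to the seed knots themselves; the fibered seeds are $6_3$, $7_7$, $8_{12}$, $3_1\sharp 3_1^*$, $4_1\sharp 4_1$, and none of their Alexander polynomials has the Hanselman form. If you want to complete your argument, you would need to replace your final paragraph with something of this nature (or otherwise rule out the polynomial $t^2-4t+7-4t^{-1}+t^{-2}$ for alternating fibered knots of signature zero); as written, the step that ``is where essentially all of the difficulty lies'' is a genuine gap.
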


Our argument, based on a recent result by Hanselman \cite{Hanselman}, uses 
several invariants of knots or 3-manifolds; for knots, we study the signature and some finite type invariants, and
for $3$-manifolds we deploy the $SL(2,\mathbb{C})$ Casson invariant.

\begin{remark}
We can also see that alternating pretzel knots admit no purely cosmetic surgeries as follows. 
By the result of Hanselman, an alternating pretzel knot with purely cosmetic surgeries would be of genus two and signature zero (Lemma~\ref{LemAlt}). 
An alternating pretzel knot of genus two has five strands and an odd number of crossings in each twist region. %by ***
Then, we can diagrammatically verify that such knots are negative (i.e., all the crossings are negative crossings), up to taking the mirror image, and so, must have positive, in particular non-zero, signature by \cite{CochranGompf, Przytycki, Traczyk2}. 
Thus, such knots admit no purely cosmetic surgeries. 
\end{remark}

Let's recall some basic definitions and terminology about Dehn surgery.  
Given knot $K$ in the 3-sphere $S^3$, the following operation is
called a \textit{Dehn surgery}: 
take the exterior $E(K)$ of $K$ and glue a solid torus onto the peripheral torus $\partial E(K)$. 
The \textit{surgery slope} of the Dehn surgery on $\partial E(K)$ is represented by the curve identified with the meridian of the attached solid torus.
Using the standard meridian-longitude system, slopes on the peripheral torus are parametrized by rational numbers along with $1/0$, which corresponds to the meridian. 
When a slope $\gamma$ corresponds to a rational number $r$, Dehn surgery along $\gamma$ is called \textit{$r$-Dehn surgery}, or simply \textit{$r$-surgery}. 
The resultant manifold is denoted by $K(r)$. 

The proofs of our two theorems depend on the following lemma, due to Hanselman \cite{Hanselman}. 

\begin{lemma}\label{LemAlt}
If an alternating knot $K$ admits purely cosmetic surgeries, then the genus $g(K)$ of $K$ must be  2, the signature $\sigma(K)$ of $K$ must be 0, and the surgery slopes must be either $\pm 1$ or $\pm 2$. 
\end{lemma}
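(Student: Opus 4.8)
The plan is to derive this statement from Hanselman's theorem on cosmetic surgeries together with the special Heegaard Floer structure of alternating knots. First I would record the general constraints coming from \cite{Hanselman}: if $K$ admits purely cosmetic surgeries, then the two slopes are negatives of one another, they are either $\pm 2$ or of the form $\pm 1/q$ for a positive integer $q$, and there is an inequality bounding the slope denominator in terms of the Seifert genus $g(K)$ and the Heegaard Floer thickness $th(K)$ of $K$. This reduces the problem to a homological computation once the thickness is pinned down.

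The key second input is that alternating knots are Floer-thin: their knot Floer homology is supported on a single diagonal, so $th(K) = 0$, and both $\widehat{HFK}(K)$ and the relevant correction terms are determined by the Alexander polynomial $\Delta_K(t)$ and the signature $\sigma(K)$ (Ozsv\'ath--Szab\'o). In particular $2g(K)$ equals the breadth of $\Delta_K$, and one has the identity $\tau(K) = -\sigma(K)/2$. Substituting $th(K) = 0$ into Hanselman's genus/slope inequality collapses the admissible range: the slope-$\pm 1/q$ case is forced down to $q = 1$, the genus is pinned to $g(K) = 2$, and the surviving slopes are exactly $\pm 1$ and $\pm 2$.

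It remains to see that $\sigma(K) = 0$. Here I would compare the Heegaard Floer correction terms of the two homeomorphic surgered manifolds $K(r)$ and $K(-r)$. An orientation-preserving homeomorphism forces the $d$-invariants to agree; since $K(-r)$ is, up to orientation reversal, a surgery on the mirror $\overline{K}$, the surgery formula yields
$$V_0(K) = V_0(\overline{K}) = 0.$$
As $V_0 \ge 0$ with $V_0(K) = 0$ forcing $\tau(K) \le 0$, applying this to both $K$ and $\overline{K}$ (where $\tau(\overline{K}) = -\tau(K)$) gives $\tau(K) = 0$. Combined with $\tau(K) = -\sigma(K)/2$ for alternating knots, we conclude $\sigma(K) = 0$.

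The main obstacle is the second step: extracting from Hanselman's graded immersed-curve estimates the sharp conclusion that thinness forces the genus to be precisely $2$, and the denominator in the $\pm 1/q$ case to be exactly $1$, rather than only the one-sided bound $g(K) \ge 2$ that one reads off naively. Carrying the $d$-invariant bookkeeping through uniformly for the integral slopes $\pm 1$ and $\pm 2$ (where $H_1$ is nontrivial and several spin$^c$ structures must be matched), so that the signature vanishing holds in every surviving case, is the other delicate point.
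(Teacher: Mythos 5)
Your proposal is sound, and its first half coincides with the paper's argument; the derivation of $\sigma(K)=0$ is where you genuinely diverge. For the slopes and the genus, the paper's primary route reads off the constrained Alexander polynomial $\Delta_K(t)=nt^2-4nt+(6n+1)-4nt^{-1}+nt^{-2}$ from \cite[Theorem 3]{Hanselman} and obtains $g(K)=2$ from Murasugi \cite{Murasugi58} and Crowell \cite{Crowell}; its explicitly stated alternative is exactly your computation, substituting $th(K)=0$ into $1\le (th(K)+2g)/(2g(g-1))$. Be aware that this inequality presupposes $g\ge 2$, so your version owes a sentence ruling out genus $0$ and $1$ (for alternating knots this follows from the Boyer--Lines condition $a_2(K)=0$ together with Murasugi--Crowell, or from Wang's theorem on genus-one knots); the Alexander-polynomial route avoids the issue entirely, which is likely why the paper leads with it, and you correctly flag this sharpness question as the delicate point. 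For the signature, the paper simply cites Hanselman's Theorem 3, which asserts $\sigma(K)=0$ outright for thin knots, while you reconstruct it from the $d$-invariant comparison giving $V_0(K)=V_0(\overline{K})=0$, hence $\tau(K)=0$ (this is Ni--Wu's earlier cosmetic-surgery obstruction), and then Ozsv\'ath--Szab\'o's identity $\tau(K)=-\sigma(K)/2$ for alternating knots. That chain is correct and has the merit of using only results predating Hanselman for this step, at the cost of redoing the $\mathrm{spin}^c$ bookkeeping that the citation packages for you.
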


\begin{proof}
The latter two assertions follow from \cite[Theorem 3]{Hanselman} directly. 
Also from the same theorem, the Alexander polynomial of $K$ must be $\Delta_K (t) = n t^2 - 4 n t + ( 6 n + 1 ) - 4 n t^{-1} + n t^{-2}$ for some positive integer $n$. 
Then, by the work of Murasugi \cite{Murasugi58} and Crowell \cite{Crowell}, the genus $g(K)$ of $K$ must be 2. 
Note that this also follows from the results of Hanselman alone. 
Using his Theorem 2, either the genus $g(K)$ is 2 (in the case of $\pm 2$-surgeries) or else,
if the surgery is $\pm1 = \pm1/1$, we'll have
$1 \leq (t(K) + 2g)/(2g(g-1))$. 
But, since $t(K) = 0$ when $K$ is alternating, this again implies $g(K) = 2$. 
\end{proof}

\section{Two-bridge knots}

In this section, we give a proof of Theorem~\ref{Thm2-bridge} in three steps. 

Due to Lemma~\ref{LemAlt}, and since two-bridge knots are alternating, it suffices to show that, 
for a two-bridge knot $K$ of signature $\sigma(K)=0$ and genus $g(K)=2$, the surgeries
$\pm 1$ and $\pm 2$ on $K$ do not yield manifolds admitting an orientation-preserving 
homeomorphism.
 
In \cite[Corollary 4.5]{IchiharaWu}, it is shown that a 2-bridge knot of genus two admitting purely cosmetic surgeries would have the form $K_{x,y,-(x+y),x}$. 
Here, following \cite{IchiharaWu}, $K_{b_1,c_1, \cdots ,b_m,c_m}$ denotes the two-bridge knot having the Conway form $C(2 b_1, 2 c_1, \cdots , 2 b_m, 2 c_m)$. 
See Figure 3 in \cite{IchiharaWu}. 
Also note that the continued fraction associated to $K$ is
$$ [ 2x, 2y, -2(x+y), 2x] = 1/(2x+1/(2y+1/(-2(x+y)+1/2x)))\;. $$

In the following, we focus on these two-bridge knots. 

\subsection{Signature}

First, from the condition that $\sigma(K) = 0$, we have the following. 

\begin{proposition}
Let $K$ be a two-bridge knot associated to the continued fraction $[ 2x, 2y, -2(x+y), 2x] $ for integers $x>0$ and $y \ne 0$. 
If $K$ admits purely cosmetic surgeries then $y<0$ and $(x+y) > 0$. 
\end{proposition}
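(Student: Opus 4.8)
The plan is to reduce the statement to a signature computation and then to a parity argument on the determinant of the symmetrized Seifert form. By Lemma~\ref{LemAlt}, the hypothesis that $K$ admits purely cosmetic surgeries already forces $g(K)=2$ and $\sigma(K)=0$, so it suffices to extract the stated sign conditions on $x$ and $y$ from $\sigma(K)=0$ alone. To do this I would use the genus-two Seifert surface obtained by plumbing the bands recorded by the Conway form $C(2x,2y,-2(x+y),2x)$; with respect to the standard basis of its first homology the Seifert matrix $V$ is bidiagonal, so the symmetrized form is the symmetric tridiagonal matrix
$$ V + V^{T} = \begin{pmatrix} 2x & 1 & 0 & 0 \\ 1 & 2y & 1 & 0 \\ 0 & 1 & -2(x+y) & 1 \\ 0 & 0 & 1 & 2x \end{pmatrix}. $$

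Next I would compute the leading principal minors $D_0 = 1, D_1, D_2, D_3, D_4$ of $V+V^{T}$ using the tridiagonal recurrence $D_k = a_k D_{k-1} - D_{k-2}$, where $a_k$ is the $k$-th diagonal entry. A short calculation gives $D_1 = 2x$, $D_2 = 4xy-1$, $D_3 = 2y\bigl(1 - 4x(x+y)\bigr)$, and finally $D_4 = \det(V+V^{T}) = 1 - 16x^2 y(x+y)$. One checks that each $D_i$ is a nonzero integer: $D_1 = 2x \neq 0$, the numbers $D_2$ and $D_4$ are odd, and in $D_3$ both factors $2y$ and $1 - 4x(x+y)$ are nonzero. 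As a consistency check, $|D_4|$ equals the determinant of $K$, recoverable directly from the continued fraction, which also fixes the sign of $D_4$.

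Since all the eigenvalues of $V+V^{T}$ are then nonzero, the hypothesis $\sigma(K)=0$ means that this $4\times 4$ symmetric matrix has exactly two positive and two negative eigenvalues. Hence its determinant, the product of the eigenvalues, is positive: $D_4 = 1 - 16x^2 y(x+y) > 0$. Because $16x^2 y(x+y)$ is an integer and $x>0$, this inequality forces $y(x+y) \le 0$, and combined with $y \neq 0$ it leaves only the possibility $y < 0$ together with $x+y \ge 0$.

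It remains to upgrade $x+y \ge 0$ to $x+y > 0$, which is where the genus hypothesis re-enters: if $x+y=0$ then the middle entry $-2(x+y)$ of the Conway form vanishes and the continued fraction $[2x,2y,0,2x]$ collapses, so $K$ would fail to be the genus-two knot supplied by Lemma~\ref{LemAlt}. Thus $x+y \neq 0$, and therefore $x+y > 0$, completing the argument. I expect the main obstacle to be the first step, namely pinning down the correctly signed Seifert matrix for the plumbing (in particular the signs on the off-diagonal and the orientation convention), since the entire sign analysis — not merely the value of $\det K$ — hinges on it; the remaining linear algebra and number theory are routine.
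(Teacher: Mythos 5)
Your argument is correct in substance but takes a genuinely different route from the paper. The paper also reduces to the constraint $\sigma(K)=0$ via Lemma~\ref{LemAlt}, but then computes the signature directly in each of the three sign cases using the Lee--Traczyk combinatorial formula $\sigma(L)=o(D)-y(D)-1$ applied to explicit reduced alternating diagrams, finding $\sigma=-2$ when $y>0$, $x+y>0$ and $\sigma=+2$ when $y<0$, $x+y<0$. You instead use only the coarser consequence that a nonsingular $4\times 4$ symmetrized Seifert form of signature zero has positive determinant, which converts $\sigma(K)=0$ into the single inequality $1-16x^2y(x+y)>0$ and disposes of all cases at once with no diagram manipulation; this is arguably cleaner, at the cost of needing the genus-two Seifert form explicitly. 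One caveat: the matrix you wrote down is not quite the symmetrized Seifert form of $K$ with the paper's conventions. Converting the paper's plus-convention continued fraction $[2x,2y,-2(x+y),2x]$ to the minus convention native to linear plumbings gives diagonal entries $2x,\,-2y,\,-2(x+y),\,-2x$; with your diagonal $2x,\,2y,\,-2(x+y),\,2x$ the matrix has signature $+2$ at $x=y=1$, whereas the paper computes $\sigma=-2$ there, so your matrix is (at best) the form of the mirror image. Fortunately this does not damage the proof: both sign conventions yield the same determinant $D_4=1-16x^2y(x+y)$ (which one can confirm independently, since it equals the continuant of the continued fraction, i.e.\ $\pm\det K$, and its sign is consistent with the paper's signature values in all three cases), and mirroring preserves both the determinant and the condition $\sigma=0$. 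You should, however, actually pin down the Seifert matrix or verify the identity $\det(V+V^T)=1-16x^2y(x+y)$ rather than leaving it flagged as the ``main obstacle,'' since the entire argument rests on the sign of that quantity. Your treatment of the degenerate case $x+y=0$ (which the paper silently excludes from its case list) is a welcome addition.
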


\begin{proof}
Consider the knot $K$ associated to the the continued fraction $[ 2x, 2y, -2(x+y), 2x]$. 
We can assume that $x > 0$ by taking the mirror image if necessary. 

There are 3 cases according to the signs of the four terms in $[ 2x, 2y, -2(x+y), 2x]$:
(i) $y>0$ and $(x+y) > 0$, (ii) $y<0$ and $(x+y) > 0$, and (iii) $y<0$ and $(x+y) < 0$. 

We use the following result of Lee \cite[Proposition 3.11]{Lee} and Traczyk \cite[Theorem 2(1)]{Traczyk} %Lee 
on the signature of an alternating knot:%, \cite[Proposition 3.11]{Lee}: 
for an oriented nonsplit alternating link $L$ and a reduced alternating diagram $D$ of $L$, 
\begin{equation}\label{eqSig}
\sigma(L) = o(D) - y(D) - 1
\end{equation}
holds. 
Here $\sigma(L)$ is the signature of $L$, 
$o(D)$ the number of components of the diagram obtained by 0-resolutions of pattern $A$ at all the crossings of $D$, and $y(D)$ the number of positive crossings of $D$. 

When $y>0$ and $(x+y) > 0$ (case (i)), since $[ 2x, 2y, -2(x+y), 2x] = [ 2x, 2y-1, 1, 2x+2y-2, 1, 2x-1]$, the knot $K$ has the reduced alternating diagram $D$ illustrated in Figure~\ref{fig:y>0x+y<0}. 
\begin{figure}[!htb]
\centering
\begin{overpic}[width=.8\textwidth]{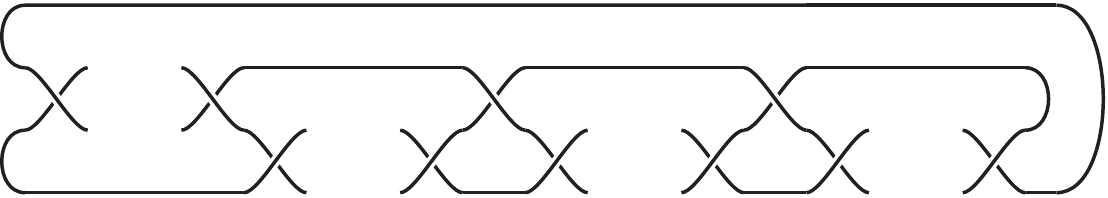}
\put(2,12){\scalebox{6}[1.5]{\rotatebox{90}{\}}}} 
\put(10.5,14.7){$2x$} 
\put(21.5,6.5){\scalebox{6}[1.5]{\rotatebox{90}{\}}}} 
\put(27,9.3){$2y-1$} 
\put(47,6.5){\scalebox{6}[1.5]{\rotatebox{90}{\}}}} 
\put(48.5,9.3){$2x+2y-2$} 
\put(72.5,6.5){\scalebox{6}[1.5]{\rotatebox{90}{\}}}} 
\put(78,9.3){$2x-1$} 
\put(10,8){$\cdots$} 
\put(30,2.5){$\cdots$} 
\put(55.5,2.5){$\cdots$} 
\put(81,2.5){$\cdots$} 
\end{overpic}
\caption{Diagram of knots $[ 2x, 2y-1, 1, 2x+2y-2, 1, 2x-1]$}\label{fig:y>0x+y<0}
\end{figure}

Then, by direct calculation, we see that $o(D) = 4x + 4y -3$ and $y(D) = 4x + 4y -2$. 
It follows that $\sigma (K) = -2$. 

In the same way, when $y<0$ and $(x+y) < 0$ (case (iii)), 
since $[ 2x, 2y, -2(x+y), 2x] = [ 2x-1, 1, -2y-2, 1, -2x-2y-1, 2x ]$, 
the knot $K$ has a reduced alternating diagram $D$ 
for which $o(D) = 2x + 3$ and $y(D) = 2x$. 
It follows that  $\sigma (K) = 2$. 

That is, in both cases, the signature, $\sigma (K)$, is non-zero. 
From Lemma~\ref{LemAlt}, this implies that if $K$ admits purely cosmetic surgeries 
we must be in the remaining case, case (ii), where $y<0$ and $(x+y) > 0$. 
\end{proof}

%\begin{remark}\label{rem1}
%Actually we note that the result \cite[Theorem 2(1)]{Traczyk} implies Equation~\eqref{eqSig}. 
%See \cite{KishimotoShibuyaTsukamoto}. 
%\end{remark}
\begin{remark}\label{rem1}
Note that \cite[Theorem 2(1)]{Traczyk} implies Equation~\eqref{eqSig}. 
See \cite{KishimotoShibuyaTsukamoto}. 
\end{remark}

In fact, by the same calculations, we can verify that $\sigma (K) = 0$ when $y<0$ and $(x+y) > 0$ (case (ii)). 
Since $[ 2x, 2y, -2(x+y), 2x] = [ 2x-1, 1, -2y-1, 2x+2y-1, 1, 2x-1]$, the knot $K$ has a reduced alternating diagram $D$, for which $o(D) = 4x + 2y$, $y(D) = 4x + 2y -1$, so that $\sigma (K) = 0$. 

Thus, it remains to handle case (ii), where $y<0$ and $(x+y) > 0$. 
In this case, the simple continued fraction for $K$ is $[2x-1, 1, -(2y+1), 2(x+y)-1, 1, 2x-1]$. 
Note that this knot is amphichiral when the middle two terms agree, that is, when $x = -2y$.

\subsection{$SL(2,\mathbb{C})$ Casson invariant}

Using the $SL(2,\mathbb{C})$ Casson invariant, we have the following. 

\begin{proposition}\label{prp:SL2C}
Let $K$ be a two-bridge knot associated to the simple continued fraction $[2x-1, 1, -(2y+1), 2(x+y)-1, 1, 2x-1]$ for some integers $x>0$, $y<0$ with $(x+y) > 0$. 
If $K$ admits purely cosmetic surgeries then $x = -2y$. 
\end{proposition}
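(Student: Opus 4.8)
The plan is to exploit the fact that the $SL(2,\mathbb{C})$ Casson invariant $\lambda_{SL(2,\mathbb{C})}$ is an invariant of the homeomorphism type of a closed oriented $3$-manifold which, depending only on the $SL(2,\mathbb{C})$ character variety of the fundamental group, is moreover insensitive to the orientation. Suppose $K$ admits purely cosmetic surgeries. By Lemma~\ref{LemAlt} the two slopes lie in $\{\pm 1,\pm 2\}$, and by the theorem of Ni and Wu a purely cosmetic pair necessarily consists of a slope $r$ together with its negative $-r$; thus the pair is $\{1,-1\}$ or $\{2,-2\}$ with $r\in\{1,2\}$. An orientation-preserving homeomorphism $K(r)\cong K(-r)$ then forces $\lambda_{SL(2,\mathbb{C})}(K(r))=\lambda_{SL(2,\mathbb{C})}(K(-r))$. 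Since $K(-r)$ is the orientation-reverse of $\overline{K}(r)$, where $\overline{K}$ is the mirror of $K$, this equality compares $r$-surgery on $K$ with $r$-surgery on its mirror; the content of the Proposition is that, within the admissible range $x>0$, $y<0$, $x+y>0$, this can happen only for the amphichiral member $x=-2y$ of the family.

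To evaluate both sides I would invoke the computation of Boden and Curtis of the $SL(2,\mathbb{C})$ Casson invariant of Dehn surgeries on two-bridge knots, which expresses $\lambda_{SL(2,\mathbb{C})}(K(p/q))$ as an explicit piecewise-linear function of the slope $p/q$ whose linear pieces are governed by the boundary slopes $s_i=p_i/q_i$ of $K$ and their multiplicities $m_i$ (equivalently, by the Culler--Shalen seminorm, or the Newton polygon of the $A$-polynomial). Any terms depending only on the first homology agree for $r$ and $-r$, since $K(r)$ and $K(-r)$ have the same first homology, and hence cancel in the difference. Writing $\Delta$ for the minimal geometric intersection number of slopes, the condition $\lambda_{SL(2,\mathbb{C})}(K(r))=\lambda_{SL(2,\mathbb{C})}(K(-r))$ thus reduces, up to a fixed positive constant, to
\[
\sum_i m_i\bigl(|r q_i-p_i|-|r q_i+p_i|\bigr)=0 ,
\]
since $\Delta(r,s_i)=|rq_i-p_i|$ and $\Delta(-r,s_i)=|rq_i+p_i|$.

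The next step is to determine the boundary-slope data $(s_i,m_i)$ for the family $[2x-1,1,-(2y+1),2(x+y)-1,1,2x-1]$ as explicit functions of $x$ and $y$, by running the Hatcher--Thurston algorithm on the continued fraction. Passing to the mirror negates every $s_i$, so the displayed sum vanishes whenever the multiset $\{(s_i,m_i)\}$ is symmetric under $s_i\mapsto -s_i$. Substituting $r=1$ and $r=2$ and solving the resulting equation over the admissible region, I expect to find, consistently with the remark that $K$ is amphichiral exactly when $x=-2y$, that the sum vanishes only when the boundary slopes are symmetric about $0$, which forces $x=-2y$.

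The main obstacle is this explicit boundary-slope computation together with the ensuing sign analysis: one must compute the $(s_i,m_i)$ explicitly, control the signs of the linear forms $rq_i\mp p_i$ throughout the region $x>0$, $y<0$, $x+y>0$, and verify that the vanishing of the sum genuinely forces $x=-2y$ rather than admitting spurious solutions. I would also confirm that the Boden--Curtis formula applies to every member of the family, handling any non-hyperbolic or otherwise degenerate knot (such as a torus knot) separately, although such knots have nonzero signature and so are already excluded by the hypothesis $\sigma(K)=0$.
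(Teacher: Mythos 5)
Your strategy coincides with the paper's: apply the Boden--Curtis/Ichihara--Saito surgery formula to write $\lambda_{SL(2,\mathbb{C})}(K(p))-\lambda_{SL(2,\mathbb{C})}(K(-p))$ as $\tfrac{p}{2}\left(S_{-}-S_{+}\right)$, where $S_{\pm}$ are the sums of the weights $W_i$ over the positive and negative boundary slopes $N_i$ (all even integers by Hatcher--Thurston, which is what collapses the absolute values), and then show this is nonzero unless $x=-2y$. Be aware, however, that everything you have deferred --- the explicit enumeration of the boundary-slope continued fractions, which the paper carries out via the Mattman--Maybrun--Robinson substitution scheme in four cases with roughly a dozen subcases each, culminating in $S_{-}-S_{+}=2(x+2y)(4x^2-6x+5)$ --- is the entire content of the proof, and the vanishing condition is the single scalar equation $S_{+}=S_{-}$ rather than symmetry of the whole boundary-slope multiset, so ruling out ``spurious'' solutions genuinely requires the full computation and cannot be replaced by the amphichirality heuristic.
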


%To prove the proposition above, our key ingredient is considering the $SL(2,\mathbb{C})$ Casson invariant, originally introduced by Curtis in \cite{Curtis, CurtisErratum}. 
%A practical surgery formula for two-bridfe knots was obtained in \cite{BodenCurtis}, and was used for a study of cosmetic surgeries on two-bridge knots in \cite{IchiharaSaito}. 
To prove the proposition above, our key ingredient is the $SL(2,\mathbb{C})$ Casson invariant, originally introduced by Curtis in \cite{Curtis, CurtisErratum}. 
A practical surgery formula for two-bridge knots was obtained in \cite{BodenCurtis}, and was used for a study of cosmetic surgeries on two-bridge knots in \cite{IchiharaSaito}. 

\begin{proof}[Proof of Proposition~\ref{prp:SL2C}]
Let $K$ be a two-bridge knot associated to the simple continued fraction $[2x-1, 1, -(2y+1), 2(x+y)-1, 1, 2x-1]$ for some integers $x>0$, $y<0$ with $(x+y) > 0$, and suppose that $x \ne -2y$. 

%To show that $K$ admits no purely cosmetic surgeries, due to Lemma~\ref{LemAlt}, it suffices to prove that the pairs of 3-manifolds obtained by $\pm 1$- and $\pm 2$-surgeries on $K$ have the different vales of the $SL(2,\mathbb{C})$ Casson invariant. 
By Lemma~\ref{LemAlt}, to show that $K$ admits no purely cosmetic surgeries it suffices to prove that the pairs of 3-manifolds obtained by $\pm 1$- and $\pm 2$-surgeries on $K$ have different values of the $SL(2,\mathbb{C})$ Casson invariant.

%As shown in \cite{IchiharaSaito}, based on \cite{BodenCurtis}, we have the following formula for the pair of surgered manifolds to compute the difference of the vales of the $SL(2,\mathbb{C})$ Casson invariant. 
As shown in \cite{IchiharaSaito}, based on \cite{BodenCurtis}, the following formula computes the difference in the values of the $SL(2,\mathbb{C})$ Casson invariant for the pair of surgered manifolds. 
\begin{eqnarray*}
\lambda_{SL(2, \Bbb C)} (K(p/q)) - \lambda_{SL(2, \Bbb C)} (K(-p/q)) 
&=& 
\frac{1}{4} \sum_i W_i \left( | p - q N_i |  - | -p - q N_i | \right).
\end{eqnarray*}
%Here $\lambda_{SL(2, \Bbb C)} (K(p/q))$ denotes the the vales of the $SL(2,\mathbb{C})$ Casson invariant of the 3-manifold obtained by $p/q$-surgery on a two-bridge knot $K$. 
Here $\lambda_{SL(2, \Bbb C)} (K(p/q))$ denotes the the value of the $SL(2,\mathbb{C})$ Casson invariant of the 3-manifold obtained by $p/q$-surgery on a two-bridge knot $K$. 
Also $N_1 , \cdots , N_n$ denote the boundary slopes for $K$, and 
$W_i$ is set to be $\prod_j ( | n_j | -1 )$ for the continued fraction expansion $[c, n_1 , \cdots , n_k]$ associated to $N_i$. 
%Please refer \cite[Section3]{IchiharaSaito} for details. 
Please see \cite[Section 3]{IchiharaSaito} for details. 

Now we only consider $\pm 1$- and $\pm 2$-surgeries, i.e., $p = \pm1, \pm2$ and $q=1$. 
%Moreover, since all the boundary slopes for two-bridge knots are even integers shown by \cite{HatcherThurston}, we see that $\lambda_{SL(2, \Bbb C)} (K(p)) - \lambda_{SL(2, \Bbb C)} (K(-p)) $ is equal to 
Moreover, since all the boundary slopes for two-bridge knots are even integers, as shown by \cite{HatcherThurston}, we see that $\lambda_{SL(2, \Bbb C)} (K(p)) - \lambda_{SL(2, \Bbb C)} (K(-p)) $ is equal to 
$$\frac{p}{2} \left(\,- \sum_{N_i >0} W_i  + \sum_{N_i <0} W_i \,\right) .$$ 

Consequently, the argument comes down to looking at $S_+  = \sum_{N_i > 0}W_i$ and $S_- = \sum_{N_i < 0}W_i$ and showing that they are not equal. 

%We start with the simple continued fraction $[2x-1,1,-(2y+1),2(x+y)-1,1,2x-1]$ and calculate $S_{\pm}$  in the same way as in \cite{IchiharaSaito} based on the method originally developed in \cite[Theorem 2]{MattmanMaybrunRobinson}. 
We start with the simple continued fraction $[2x-1,1,-(2y+1),2(x+y)-1,1,2x-1]$ and calculate $S_{\pm}$  as in \cite{IchiharaSaito} based on the method originally developed in \cite[Theorem 2]{MattmanMaybrunRobinson}.

\bigskip\noindent
Case 1. $y<-1$ and $x+y>1$. 

\bigskip
We use $6$-tuples of the form $(b_1,b_2,b_3,b_4,b_5,b_6)$ with $b_j=0,1$ 
to show where substitutions are applied. 
For example, $(0,1,0,0,1,0)$ means substitution rules are applied at positions $2$ and $5$. 
Then we have the boundary slope continued fraction $[2x,2y,-2(x+y),2x]$ 
which is the longitude continued fraction. 
Hence we see that $n^+_0=2$ and  $n^-_0=2$.  

We note that each term of a boundary slope continued fractions is at least two in absolute value. 
Hence $(0,0,0,b_4,b_5,b_6)$ patterns do not give boundary slopes 
since the $1$ at position $2$ remains after making the substitutions. 
Similarly, we can eliminate $(b_1,b_2,b_3,0,0,0)$. 
We also note that the $6$-tuples can have no adjacent $1$'s.
Therefore, the following 12 cases give all boundary slope continued fractions. 

\bigskip
Case 1-1. $(0,0,1,0,0,1)$. 

After making the substitutions, we have 
$[2x-1,2,(-2,2)^{-y-1},-2(x+y),-2,(2,-2)^{x-1}]$. 

Hence $n^+_1=2x$, $n^-_1=-2y$, $N_1=4(x+y)$ and $W_1=2(x-1)\left( 2(x+y)-1 \right)$. 

\bigskip
Case 1-2. $(0,0,1,0,1,0)$. 

Then we have 
$[2x-1,2,(-2,2)^{-y-1},-2(x+y)-1,2x]$. 

Hence $n^+_2=1$, $n^-_2=1-2y$, $N_2=4y$ and $W_2=4(x-1)(2x-1)(x+y)$. %-4y -> 4y, 190829
%Hence $n^+_2=1$, $n^-_2=1-2y$, $N_2=-4y$ and $W_2=4(x-1)(2x-1)(x+y)$. 

\bigskip
Case 1-3. $(0,1,0,0,0,1)$. 

Then we have 
$[2x,2y,1-2(x+y),-2,(2,-2)^{x-1}]$. 

Hence $n^+_3=2x+1$, $n^-_3=1$, $N_3=4x$ and $W_3=-2(2x-1)(2y+1)(x+y-1)$. 

\bigskip
Case 1-4. $(0,1,0,0,1,0)$. 

Then we have 
$[2x,2y,-2(x+y),2x]$. 

Hence $n^+_4=2$, $n^-_4=2$ and $N_4=0$. 

\bigskip
Case 1-5. $(0,1,0,1,0,0)$. 

Then we have 
$[2x,2y-1,(2,-2)^{x+y-1},2,2x-1]$. 

Hence $n^+_5=2(x+y)+1$, $n^-_5=1$, $N_5=4(x+y)$ and $W_5=-4y(x-1)(2x-1)$. 

\bigskip
Case 1-6. $(0,1,0,1,0,1)$. 

Then we have 
$[2x,2y-1,(2,-2)^{x+y-1},3,(-2,2)^{x-1}]$. 

Hence $n^+_6=4x+2y-1$, $n^-_6=0$, $N_6=2(4x+2y-1)$ and $W_6=-4y(2x-1)$. 

\bigskip
Case 1-7. $(1,0,0,0,0,1)$. 

Then we have 
$[(-2,2)^{x-1},-2,2y+1,1-2(x+y),-2,(2,-2)^{x-1}]$. 

Hence $n^+_7=2x$, $n^-_7=2x$ and $N_7=0$. 

\bigskip
Case 1-8. $(1,0,0,0,1,0)$. 

Then we have 
$[(-2,2)^{x-1},-2,2y+1,-2(x+y),2x]$. 

Hence $n^+_8=1$, $n^-_8=2x+1$, $N_8=-4x$ and $W_8=-2(2x-1)(y+1)\left( 2(x+y)-1 \right)$. 

\bigskip
Case 1-9. $(1,0,0,1,0,0)$. 

Then we have 
$[(-2,2)^{x-1},-2,2y,(2,-2)^{x+y-1},2,2x-1]$. 

Hence $n^+_9=2(x+y)$, $n^-_9=2x$, $N_9=4y$ and $W_9=-2(x-1)(2y+1)$. 

\bigskip
Case 1-10. $(1,0,0,1,0,1)$. 

Then we have 
$[(-2,2)^{x-1},-2,2y,(2,-2)^{x+y-1},3,(-2,2)^{x-1}]$. 

Hence $n^+_{10}=2(2x+y-1)$, $n^-_{10}=2x-1$, $N_{10}=2\left( 2(x+y)-1 \right)$ and 

$W_{10}=-2(2y+1)$. 

\bigskip
Case 1-11. $(1,0,1,0,0,1)$. 

Then we have 
$[(-2,2)^{x-1},-3,(2,-2)^{-y-1},2(x+y),2,(-2,2)^{x-1}]$. 

Hence $n^+_{11}=2x-1$, $n^-_{11}=2(x-y-1)$, $N_{11}=2(2y+1)$ and 
$W_{11}=2\left( 2(x+y)-1 \right)$. 

\bigskip
Case 1-12. $(1,0,1,0,1,0)$. 

Then we have 
$[(-2,2)^{x-1},-3,(2,-2)^{-y-1},2(x+y)+1,-2x]$. 

Hence $n^+_{12}=0$, $n^-_{12}=2(x-y)-1$, $N_{12}=-2\left( 2(x-y)-1 \right)$ and 

$W_{12}=4(x+y)(2x-1)$. 

\bigskip\noindent
Case 2. $y<-1$ and $x+y=1$. 

\bigskip
As in Case 1, we can eliminate the patterns $(b_1,b_2,b_3,0,0,0)$ and $(b_1,b_2,0,0,0,b_6)$. 
Therefore, the following 10 cases give all boundary slope continued fractions. 

\bigskip
Case 2-1. $(0,0,1,0,0,1)$. 

Then we have 
$[2x-1,2,(-2,2)^{x-2},-2,-2,(2,-2)^{x-1}]$. 

Hence $n^+_1=2x$, $n^-_1=2x-2$, $N_1=4$ and $W_1=2(x-1)$. 

\bigskip
Case 2-2. $(0,0,1,0,1,0)$. 

Then we have 
$[2x-1,2,(-2,2)^{x-2},-3,2x]$. 

Hence $n^+_2=1$, $n^-_2=2x-1$, $N_2=-4(x-1)$ and $W_2=4(x-1)(2x-1)$. 

\bigskip
Case 2-3. $(0,1,0,0,1,0)$. 

Then we have 
$[2x,-2(x-1),-2,2x]$. 

Hence $n^+_3=2$, $n^-_3=2$ and $N_3=0$. 

\bigskip
Case 2-4. $(0,1,0,1,0,0)$. 

Then we have 
$[2x,-2x+1,2,2x-1]$. 

Hence $n^+_4=3$, $n^-_4=1$, $N_4=4$ and $W_4=4(x-1)^2(2x-1)$. 

\bigskip
Case 2-5. $(0,1,0,1,0,1)$. 

Then we have 
$[2x,-2x+1,3,(-2,2)^{x-1}]$. 

Hence $n^+_5=2x+1$, $n^-_5=0$, $N_5=2(2x+1)$ and $W_5=4(x-1)(2x-1)$. 

\bigskip
Case 2-6. $(1,0,0,0,1,0)$. 

Then we have 
$[(-2,2)^{x-1},-2,-2x+3,-2,2x]$. 

Hence $n^+_6=1$, $n^-_6=2x+1$, $N_6=-4x$ and $W_6=2(x-2)(2x-1)$. %2(2x-1)(2x-1) -> 2(x-2)(2x-1) , 190829
%Hence $n^+_6=1$, $n^-_6=2x+1$, $N_6=-4x$ and $W_6=2(2x-1)(2x-1)$. 

\bigskip
Case 2-7. $(1,0,0,1,0,0)$. 

Then we have 
$[(-2,2)^{x-1},-2,-2(x-1),2,2x-1]$. 

Hence $n^+_7=2$, $n^-_7=2x$, $N_7=-4(x-1)$ and $W_7=2(x-1)(2x-3)$. 

\bigskip
Case 2-8. $(1,0,0,1,0,1)$. 

Then we have 
$[(-2,2)^{x-1},-2,-2(x-1),3,(-2,2)^{x-1}]$. 

Hence $n^+_8=2x$, $n^-_8=2x-1$, $N_8=2$ and $W_8=2(2x-3)$. 

\bigskip
Case 2-9. $(1,0,1,0,0,1)$. 

Then we have 
$[(-2,2)^{x-1},-3,(2,-2)^{x-2},2,2,(-2,2)^{x-1}]$. 

Hence $n^+_9=2x-1$, $n^-_9=4(x-1)$, $N_9=-2(2x-3)$ and $W_9=2$. 

\bigskip
Case 2-10. $(1,0,1,0,1,0)$. 

Then we have 
$[(-2,2)^{x-1},-3,(2,-2)^{x-2},3,-2x]$. 

Hence $n^+_{10}=0$, $n^-_{10}=4x-3$, $N_{10}=-2(4x-3)$ and $W_{10}=4(2x-1)$. 

\bigskip\noindent
Case 3. $y=-1$ and $x+y>1$ (i.e., $x>2$). 

\bigskip
We can eliminate $(b_1,b_2,b_3,0,0,0)$ and $(b_1,0,0,0,b_5,b_6)$. 
Therefore, the following 10 cases give all the boundary slope continued fractions. 

\bigskip
Case 3-1. $(0,0,1,0,0,1)$. 

Then we have 
$[2x-1,2,-2(x-1),-2,(2,-2)^{x-1}]$. 

Hence $n^+_1=2x$, $n^-_1=2$, $N_1=4(x-1)$ and $W_1=2(x-1)(2x+1)$. 

\bigskip
Case 3-2. $(0,0,1,0,1,0)$. 

Then we have 
$[2x-1,2,-2x+1,2x]$. 

Hence $n^+_2=1$, $n^-_2=3$, $N_2=-4$ and $W_2=4(x-1)^2(2x-1)$. 

\bigskip
Case 3-3. $(0,1,0,0,0,1)$. 

Then we have 
$[2x,-2,-2x+3,-2,(2,-2)^{x-1}]$. 

Hence $n^+_3=2x+1$, $n^-_3=1$, $N_3=4x$ and $W_3=2(2x-1)(x-2)$. 

\bigskip
Case 3-4. $(0,1,0,0,1,0)$. 

Then we have 
$[2x,-2,-2(x-1),2x]$. 

Hence $n^+_4=2$, $n^-_4=2$ and $N_4=0$. 

\bigskip
Case 3-5. $(0,1,0,1,0,0)$. 

Then we have 
$[2x,-3,(2,-2)^{x-2},2,2x-1]$. 

Hence $n^+_5=2x-1$, $n^-_5=1$, $N_5=4(x-1)$ and $W_5=4(x-1)(2x-1)$. 

\bigskip
Case 3-6. $(0,1,0,1,0,1)$. 

Then we have 
$[2x,-3,(2,-2)^{x-2},3,(-2,2)^{x-1}]$. 

Hence $n^+_6=4x-3$, $n^-_6=0$, $N_6=2(4x-3)$ and $W_6=4(2x-1)$. 

\bigskip
Case 3-7. $(1,0,0,1,0,0)$. 

Then we have 
$[(-2,2)^{x-1},-2,-2,(2,-2)^{x-2},2,2x-1]$. 

Hence $n^+_7=2(x-1)$, $n^-_7=2x$, $N_7=-4$ and $W_7=2(x-1)$. 

\bigskip
Case 3-8. $(1,0,0,1,0,1)$. 

Then we have 
$[(-2,2)^{x-1},-2,-2,(2,-2)^{x-2},3,(-2,2)^{x-1}]$. 

Hence $n^+_8=4(x-1)$, $n^-_8=2x-1$, $N_8=2(2x-3)$ and $W_8=2$. 

\bigskip
Case 3-9. $(1,0,1,0,0,1)$. 

Then we have 
$[(-2,2)^{x-1},-3,2(x-1),2,(-2,2)^{x-1}]$. 

Hence $n^+_9=2x-1$, $n^-_9=2x$, $N_9=-2$ and 
$W_9=2(2x-3)$. 

\bigskip
Case 3-10. $(1,0,1,0,1,0)$. 

Then we have 
$[(-2,2)^{x-1},-3,2x-1,-2x]$. 

Hence $n^+_{10}=0$, $n^-_{10}=2x+1$, $N_{10}=-2(2x+1)$ and $W_{10}=4(x-1)(2x-1)$. 

\bigskip\noindent
Case 4. $y=-1$ and $x+y=1$ (i.e., $x=2$). 

\bigskip
We can eliminate $(b_1,b_2,b_3,0,0,0)$, $(b_1,b_2,0,0,0,b_6)$, and $(b_1,0,0,0,b_5,b_6)$. 
This leaves 9 cases in order to obtain all the boundary slope continued fractions. 

\bigskip
Case 4-1. $(0,0,1,0,0,1)$. 

Then we have 
$[3,2,-2,-2,2,-2]$. 

Hence $n^+_1=4$, $n^-_1=2$, $N_1=4$ and $W_1=2$. 

\bigskip
Case 4-2. $(0,0,1,0,1,0)$. 

Then we have 
$[3,2,-3,4]$. 

Hence $n^+_2=1$, $n^-_2=3$, $N_2=-4$ and $W_2=12$. 

\bigskip
Case 4-3. $(0,1,0,0,1,0)$. 

Then we have 
$[4,-2,-2,4]$. 

Hence $n^+_3=2$, $n^-_3=2$ and $N_3=0$. 

\bigskip
Case 4-4. $(0,1,0,1,0,0)$. 

Then we have 
$[4,-3,2,3]$. 

Hence $n^+_4=3$, $n^-_4=1$, $N_4=4$ and $W_4=12$. 

\bigskip
Case 4-5. $(0,1,0,1,0,1)$. 

Then we have 
$[4,-3,3,-2,2]$. 

Hence $n^+_5=5$, $n^-_5=0$, $N_5=10$ and $W_5=12$. 

\bigskip
Case 4-6. $(1,0,0,1,0,0)$. 

Then we have 
$[-2,2,-2,-2,2,3]$. 

Hence $n^+_6=2$, $n^-_6=4$, $N_6=-4$ and $W_6=2$. 

\bigskip
Case 4-7. $(1,0,0,1,0,1)$. 

Then we have 
$[-2,2,-2,-2,3,-2,2]$. 

Hence $n^+_7=4$, $n^-_7=3$, $N_7=-2$ and $W_7=2$. 

\bigskip
Case 4-8. $(1,0,1,0,0,1)$. 

Then we have 
$[-2,2,-3,2,2,-2,2]$. 

Hence $n^+_8=3$, $n^-_8=4$, $N_8=-2$ and $W_8=2$. 

\bigskip
Case 4-9. $(1,0,1,0,1,0)$. 

Then we have 
$[-2,2,-3,3,-4]$. 

Hence $n^+_9=0$, $n^-_9=5$, $N_9=-10$ and $W_9=12$. 

\bigskip

Combining these calculations, we have
$$S_+ = -2(2y+1)-4y(2x-1)+2(x-1)(2(x+y)-1)+2(2x-1)(-(2y+1))(x+y-1)-4y(2x-1)(x-1),$$
$$S_- = 2(2(x+y)-1) + 4(x+y)(2x-1)-2(y+1)(2(x+y)-1)(2x-1)-2(2y+1)(x-1)+4(x-1)(x+y)(2x-1).$$

Thus, we obtain $S_- - S_+ = 2(x+2y)(4x^2-6x+5)$. 
If we are not in the amphichiral case, that is, if $x \ne -2y$, this difference is positive as $4x^2-6x+5$ has imaginary roots. 

As shown in \cite{IchiharaSaito}, this implies that if $K$ admits purely cosmetic surgeries we must have $x = -2y$. 
\end{proof}

Note that if $x = -2y$, then the knot $K$ is associated to the continued fraction $[4n,-2n,-2n,4n]$ for a positive integer $n$

\subsection{Finite type invariants}

Finally, by using finite type invariants of knots, we have the following. 

\begin{proposition}\label{clm:FTI} 
The two-bridge knot $K$ associated to the continued fraction $[4n,-2n,-2n,4n]$ for a positive integer $n$ admits no purely cosmetic surgeries. 
\end{proposition}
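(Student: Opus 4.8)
The plan is to exploit amphichirality. Writing the knot as the two-bridge knot $\mathfrak{b}(p,q)$, the continued fraction $[4n,-2n,-2n,4n]$ evaluates to $p/q$ with $p=64n^4+1$ and $q=16n^3+2n$, and one checks $q^2\equiv-1\pmod p$; by Schubert's criterion $K$ is amphichiral (this is the symmetric case $x=-2y$ noted after Proposition~\ref{prp:SL2C}). An orientation-reversing self-homeomorphism of $(S^3,K)$ sends $r$-surgery to $(-r)$-surgery while reversing orientation, so $K(-r)\cong -K(r)$ as oriented manifolds. By Lemma~\ref{LemAlt} only the slopes $r\in\{1,2\}$ remain, so a purely cosmetic pair $K(r)\cong K(-r)$ (orientation-preserving) would force $K(r)\cong -K(r)$; that is, it suffices to prove that the homology spheres $K(1)$ and the $\mathbb{Z}/2$-homology spheres $K(2)$ are \emph{chiral}, i.e.\ admit no orientation-reversing self-homeomorphism.

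To that end I would first record the finite type data of $K$ from its genus-two Seifert matrix for $C(4n,-2n,-2n,4n)$: a direct computation gives $\Delta_K(t)\doteq 4n^4(1-t)^4+t^2$, hence $\nabla_K(z)=1+4n^4z^4$. Thus the degree-two invariant $a_2(K)=\tfrac12\Delta_K''(1)$ vanishes, the degree-three invariant $v_3(K)$ vanishes because $K$ is amphichiral, while the degree-four coefficient $a_4(K)=4n^4$ is nonzero. This is exactly the source of the difficulty: because $a_2(K)=0$, the Casson--Walker invariant of $K(r)$ equals that of the corresponding unknot surgery and so vanishes, and because the boundary slopes of $K$ occur in symmetric pairs $\pm N_i$ with equal weights $W_i$ (as one reads off from the tables in the proof of Proposition~\ref{prp:SL2C} at $x=-2y$), every orientation-odd weighted boundary-slope sum -- in particular the $SL(2,\mathbb{C})$ Casson difference $\tfrac{p}{2}(S_--S_+)$ -- is identically zero. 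A strictly finer, orientation-sensitive finite type invariant is therefore required.

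The heart of the proof is thus to produce an orientation-reversal--odd finite type invariant $\phi$ (so $\phi(-M)=-\phi(M)$) with $\phi(K(1))\neq0$ and $\phi(K(2))\neq0$; since $\phi(K(-r))=\phi(-K(r))=-\phi(K(r))$, any such non-vanishing immediately rules out $K(r)\cong K(-r)$. I would take $\phi$ to be the first finite type invariant of $3$-manifolds beyond Casson--Walker that is odd under orientation reversal, express $\phi(K(r))$ by a surgery formula in the finite type invariants of $K$, and evaluate it on the data above. The decisive input is that, once $a_2(K)$ and $v_3(K)$ vanish, this value is governed by $a_4(K)=4n^4$ together with the two-loop data of $K$ and does not reduce to the orientation-symmetric lens-space contribution; carrying this out at both slopes (with $K(2)$ requiring the Walker normalization) should give $\phi(K(r))\neq0$ for all $n$.

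The main obstacle I anticipate is precisely the production and evaluation of $\phi$. Amphichirality neutralizes every invariant visible to the Alexander polynomial or to the boundary slopes: Casson--Walker, the $SL(2,\mathbb{C})$ Casson difference, and indeed all odd boundary-slope weightings vanish identically, while the orientation-even invariants are automatically equal on $K(r)$ and $K(-r)=-K(r)$ and so carry no information. One is forced to a genuinely finer orientation-sensitive invariant that sees data beyond the Alexander polynomial, and the delicate points are to pin down a computable such invariant, to verify that its surgery expansion retains a nonzero contribution (from $a_4(K)$ and the two-loop data) rather than conspiring to cancel against the lens-space term, and to confirm non-vanishing uniformly in $n$ at both slopes $r=1$ and $r=2$.
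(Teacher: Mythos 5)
Your setup is correct and your diagnosis of the difficulty is exactly right: the knot $C[4n,-2n,-2n,4n]$ is amphichiral, $a_2(K)=0$ kills the Casson--Walker obstruction, and the symmetry of the boundary-slope data at $x=-2y$ kills the $SL(2,\mathbb{C})$ Casson difference of Proposition~\ref{prp:SL2C}, so one is forced to an orientation-sensitive invariant that sees more than the Alexander polynomial. Your computation $\nabla_K(z)=1+4n^4z^4$ also agrees with Lemma~\ref{lem:Conway}. But the decisive step --- producing a concrete such invariant, writing down its surgery formula, and verifying non-vanishing at the slopes $\pm1,\pm2$ --- is precisely what you leave unexecuted (``I would take $\phi$ to be the first finite type invariant \dots\ beyond Casson--Walker that is odd under orientation reversal \dots\ should give $\phi(K(r))\neq0$''). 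As it stands this is a plan, not a proof: there is no identified invariant $\phi$, no surgery formula, and no evaluation, and you yourself flag this as the main obstacle. The risk you name --- that the candidate invariant ``conspires to cancel'' --- is real and cannot be waved away; whether the obstruction survives depends on an actual computation with data beyond $\nabla_K$.

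The paper fills exactly this gap by invoking Ito's LMO-based obstruction \eqref{eq:Ito}: if $K(r)\cong K(r')$ with $r=p/q\neq r'$, then $p^2(24w_4(K)-5v_4(K))+5v_4(K)+q^2(210v_6(K)+5v_4(K))=0$, where $v_4,w_4,v_6$ are canonical finite type invariants of $K$ determined by the Conway and Jones polynomials. With $a_2=a_6=0$ and $a_4=4n^4$ this reduces to a linear condition on $j_4(K)$, the coefficient of $h^4$ in $V_K(e^h)$, namely that $j_4(K)$ equal $14n^4$ (for $p^2=q^2=1$) or $284n^4$ (for $p^2=4$, $q^2=1$). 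The remaining work --- and the genuinely new input beyond the Alexander polynomial that your sketch correctly anticipates but does not supply --- is the skein-theoretic computation of Lemma~\ref{lem:Jones} showing $j_4(K)=-12n^4$, which violates both conditions for all $n>0$. So your proposal identifies the right strategy but omits its substance; to complete it you would need to name the invariant (here, effectively the degree-two part of the LMO invariant as packaged in Ito's corollary) and carry out the Jones polynomial calculation.
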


For a knot $K$, let $a_{2m}(K)$ be the coefficient of $z^{2m}$ in 
the Conway polynomial $\nabla_K(z)$ of $K$. 
As an obstruction to cosmetic surgery, 
Boyer and Lines showed the following. 

\begin{proposition}[{\cite[Proposition 5.1]{BoyerLines}}]\label{prop:a2} 
If a knot $K$ admits purely cosmetic surgery, 
then $a_2(K) = 0$. 
\end{proposition}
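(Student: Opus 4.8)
The plan is to obstruct purely cosmetic surgery using the Casson--Walker invariant $\lambda$, a $\mathbb{Q}$-valued invariant of rational homology $3$-spheres that changes sign under orientation reversal and obeys a Dehn surgery formula for knots in $S^3$. The point is that $a_2(K)$ agrees, up to a universal factor, with the second derivative of the Alexander polynomial at $1$, namely $a_2(K) = \tfrac12\Delta_K''(1)$, and this is exactly the quantity governing the knot-dependent part of the surgery formula. So the strategy is to express $\lambda$ of each surgered manifold as a term linear in $a_2(K)$ plus a term depending only on the surgery slope, and then use the homeomorphism to force the slope-dependent terms to cancel, leaving a nonzero multiple of $a_2(K)$ equal to zero.

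First I would record the surgery formula. For $p/q$-surgery on $K$ with $p\neq 0$,
\[
\lambda\bigl(K(p/q)\bigr) \;=\; \frac{q}{p}\,a_2(K) \;+\; \lambda\bigl(L(p,q)\bigr),
\]
where $L(p,q)$ is the lens space obtained by the same surgery on the unknot and $\lambda(L(p,q))$ is a Dedekind-sum expression depending only on the pair $(p,q)$; in particular $\lambda(L(1,q)) = \lambda(S^3) = 0$ for every $q$. The normalization is pinned down by Casson's original computation $\lambda(K(1/n)) = n\,a_2(K)$.

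Next I would set up the comparison. Assume $K$ admits purely cosmetic surgery, so there are distinct slopes $p/q_1 \neq p/q_2$ with an orientation-preserving homeomorphism $K(p/q_1) \cong K(p/q_2)$; equality of $H_1 = \mathbb{Z}/|p|$ lets me take the same $p>0$ for both. Since $\lambda$ is preserved by orientation-preserving homeomorphisms, subtracting the two instances of the surgery formula gives
\[
\frac{q_1 - q_2}{p}\,a_2(K) \;=\; \lambda\bigl(L(p,q_2)\bigr) - \lambda\bigl(L(p,q_1)\bigr).
\]
If the right-hand side vanishes while $q_1 \not\equiv q_2 \pmod p$, then $a_2(K) = 0$, which is the conclusion.

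The main obstacle is thus to show that the lens-space contributions cancel, i.e.\ $\lambda(L(p,q_1)) = \lambda(L(p,q_2))$. This holds precisely when the two background lens spaces are orientation-preservingly homeomorphic, equivalently $q_2 \equiv q_1^{\pm 1} \pmod p$, using the Dedekind-sum identities $s(q^{-1},p) = s(q,p)$ and $s(-q,p) = -s(q,p)$. The orientation-preserving homeomorphism does induce an isomorphism of linking forms on $\mathbb{Z}/p$, giving $q_2 \equiv q_1 c^2 \pmod p$ for some unit $c$; upgrading this to $q_2 \equiv q_1^{\pm 1}$ is the delicate point where I expect the real work to lie. In the homology-sphere case $p = 1$ there is nothing to prove, since both terms are $\lambda(S^3) = 0$; this already covers the $\pm 1$ slopes relevant to our application. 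For the $\pm 2$ slopes one has $p = 2$ with $q_1 = 1$, $q_2 = -1$ and $s(1,2) = s(-1,2) = 0$, so the background terms again cancel and $a_2(K) = 0$ follows.
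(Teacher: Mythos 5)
Your overall route---the Casson--Walker invariant $\lambda$, its surgery formula $\lambda(K(p/q)) = \tfrac{q}{p}a_2(K) + \lambda(L(p,q))$, equality of $p$ from $H_1$, and subtraction---is indeed the same circle of ideas as the cited source: the paper itself gives no proof of this proposition but imports it from \cite{BoyerLines}, whose whole subject is surgery formulae for Casson's invariant and its extension to homology lens spaces. The skeleton of your argument is therefore right. The problem is exactly the step you flag and then leave open: you must show $\lambda(L(p,q_1)) = \lambda(L(p,q_2))$, equivalently $s(q_1,p) = s(q_2,p)$, but the only constraint the orientation-preserving homeomorphism hands you is the linking-form condition $q_2 \equiv c^2 q_1 \pmod p$, and Dedekind sums are \emph{not} constant on square classes. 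Concretely, $9 \equiv 3^2 \cdot 1 \pmod{16}$, yet $s(1,16) = 35/32$ while $s(9,16) = 3/32$. So for a hypothetical orientation-preserving homeomorphism $K(16/1) \cong K(16/9)$, your displayed identity does not force $a_2(K) = 0$; it merely pins $a_2(K)$ to a fixed nonzero rational determined by $s(1,16)-s(9,16)$, and no contradiction arises. Upgrading the square-class congruence to $q_2 \equiv q_1^{\pm 1} \pmod p$ (or otherwise neutralizing the Dedekind-sum discrepancy) is not a routine detail to be deferred---for general slopes it \emph{is} the proposition, and it is the substance of what Boyer and Lines actually establish. As written, your argument proves the statement only for $p \in \{1,2\}$.

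That said, the fragment you did complete covers every use made of the proposition in this paper: by Lemma~\ref{LemAlt} the only candidate purely cosmetic slopes are $\pm 1$ and $\pm 2$, so either $p=1$ with both background terms equal to $\lambda(S^3)=0$, or $p=2$ with $\{q_1,q_2\}=\{1,-1\}$ and $s(\pm 1,2)=0$; within the paper's logic your argument suffices, even though it falls short of the proposition as stated. Two smaller points. First, at the subtraction step the hypothesis you need is just $q_1 \neq q_2$ as integers (automatic for distinct slopes sharing $p$), not $q_1 \not\equiv q_2 \pmod p$; in fact the case $q_1 \equiv q_2 \pmod p$ with $q_1 \neq q_2$ is the \emph{easy} case, since then $s(q_1,p)=s(q_2,p)$ holds automatically and $a_2(K)=0$ follows at once, so your phrasing inverts which case is troublesome. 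Second, you should dispose of $p=0$ explicitly: your surgery formula assumes $p \neq 0$, and the exclusion (distinct slopes cannot both be $0/1$, and $H_1$ rules out pairing $0$-surgery with anything else) deserves a sentence rather than being absorbed into ``take the same $p>0$.''
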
 

Calculating the Conway polynomial, we have the following. 

\begin{lemma}\label{lem:Conway} 
For the two-bridge knot 
$K = C[4n,-2n,-2n,4n]$ with $n >0$, 
\[ \nabla_K(z) = 1 + 4n^4 z^4 \, . \]
\end{lemma}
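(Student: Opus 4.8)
The plan is to compute $\nabla_K(z)$ from an explicit Seifert matrix for the genus-two surface that Seifert's algorithm produces on the alternating diagram of $C[4n,-2n,-2n,4n]$. The even continued fraction $[4n,-2n,-2n,4n]$ presents $K$ as a linear chain of four twisted, plumbed bands, so the associated surface has genus two and a $4\times 4$ Seifert matrix $V$ whose diagonal records the signed half-twist counts and whose first subdiagonal records the successive plumbings. Writing the Conway terms as $2a_i$ with $a_1=2n,\ a_2=-n,\ a_3=-n,\ a_4=2n$, the diagonal entries should be $(-1)^{i+1}a_i$, exactly as in the genus-one figure-eight case $C[2,2]$, giving the lower-bidiagonal matrix
$$V = \begin{pmatrix} 2n & 0 & 0 & 0 \\ 1 & n & 0 & 0 \\ 0 & 1 & -n & 0 \\ 0 & 0 & 1 & -2n \end{pmatrix}.$$

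First I would verify that this $V$ is genuinely a Seifert matrix for $K$: that $V-V^{T}$ is the standard symplectic intersection form (so $\det(V-V^{T})=1$, consistent with $K$ being a knot), and that the signs and plumbing pattern are those of the oriented alternating diagram used earlier. Then I would apply the Conway-normalized formula
$$\nabla_K(z) = \det\!\left(t^{1/2}V - t^{-1/2}V^{T}\right), \qquad z = t^{1/2} - t^{-1/2},$$
which reduces the lemma to a single tridiagonal determinant. Setting $w=t-1$, the matrix $tV-V^{T}$ is tridiagonal with diagonal $2nw,\,nw,\,-nw,\,-2nw$, subdiagonal entries all $t$, and superdiagonal entries all $-1$, so its determinant obeys the three-term recursion $D_k = d_k D_{k-1} + t\,D_{k-2}$. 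Running this out to $D_4$, the cross terms cancel in pairs and I expect $\det(tV-V^{T}) = 4n^4 w^4 + t^2 = 4n^4(t-1)^4 + t^2$. Pulling a factor $t^{-1/2}$ out of each of the four columns contributes $t^{-2}$, and using $(t-1)^2/t = t-2+t^{-1} = z^2$ turns this into $\nabla_K(z) = 1 + 4n^4 z^4$, as claimed.

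The main obstacle is the very first step: pinning down the correct Seifert matrix — its exact off-diagonal pattern and, crucially, the alternating signs on the diagonal — directly from the continued fraction, and keeping the orientation and mirror conventions consistent with those fixed in Section 2. Once the matrix is correct, the determinant evaluation and the substitution $z^2 = t-2+t^{-1}$ are entirely mechanical. As a built-in consistency check, the resulting Alexander polynomial $4n^4 t^2 - 16 n^4 t + (24 n^4 + 1) - 16n^4 t^{-1} + 4n^4 t^{-2}$ has precisely the shape $m t^2 - 4m t + (6m+1) - 4m t^{-1} + m t^{-2}$ forced by Lemma~\ref{LemAlt}, with $m = 4n^4$; this both sanity-checks the computation and records that $a_2(K)=0$ while $a_4(K)=4n^4\neq 0$, the nonvanishing the subsequent finite-type argument will exploit. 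As an alternative that avoids drawing the surface, one could obtain the same determinant $4n^4(t-1)^4+t^2$ from the standard transfer-matrix (continuant) formula for the Alexander polynomial of a two-bridge knot applied to $[4n,-2n,-2n,4n]$.
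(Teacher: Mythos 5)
Your proposal is correct, but note that the paper does not actually prove this lemma: it simply declares that the Conway polynomial ``can be calculated easily by hand'' and remarks that the answer is consistent with Hanselman's constraint on the Alexander polynomial. Your Seifert-matrix computation is therefore a legitimate way of supplying the omitted calculation. The lower-bidiagonal matrix you write down, with diagonal $(2n,\,n,\,-n,\,-2n)$ read off from $(-1)^{i+1}a_i$ for the Conway form $C(2a_1,2a_2,2a_3,2a_4)$ and subdiagonal $1$'s from the linear plumbing, passes the sanity checks you propose ($V-V^{T}$ unimodular; the genus-one cases $C(2,-2)$ and $C(2,2)$ return the trefoil and figure-eight polynomials), and since $\nabla$ of a knot is insensitive to mirror image and orientation reversal, the sign and orientation ambiguities you worry about cannot affect the answer. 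The tridiagonal recursion does close up as you predict: with $w=t-1$ one gets $D_2=2n^2w^2+t$, $D_3=-2n^3w^3+nwt$, $D_4=4n^4w^4+t^2$, and normalizing by $t^{-2}$ with $z^2=(t-1)^2/t$ yields $\nabla_K(z)=1+4n^4z^4$. Your closing observation that the resulting Alexander polynomial has the shape $mt^2-4mt+(6m+1)-4mt^{-1}+mt^{-2}$ with $m=4n^4$ is exactly the cross-check the paper itself invokes via Hanselman's Theorem~3; as you note, that constraint alone fixes only the shape, so the determinant computation (or an equivalent continuant/skein calculation) is still needed to pin down $m=4n^4$. In short, your argument is complete modulo the standard identification of the plumbing Seifert surface for an even Conway form, and it fills a gap the paper leaves to the reader.
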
 
\begin{proof} 
We omit the proof since this can be calculated easily by hand. 
This also can be confirmed by Hanselman's result about the constraints on the Alexander polynomial in \cite[Theorem 3]{Hanselman}. 
\end{proof} 

Using finite type invariants, Ito \cite{ItoTetsuya} proposed the following obstruction to cosmetic surgery.

\begin{proposition}[{\cite[Corollary 1.5 (i)]{ItoTetsuya}}] 
Let $K$ be a knot 
and $r = p/q \in \mathbb{Q} \setminus \{0\}$. 
If $K(r) \cong K(r')$ for $r' \ne r$, then 
\begin{equation} 
p^2 \left( 24 w_4(K) - 5 v_4(K) \right) 
+ 5 v_4(K) + q^2\left( 210 v_6(K) + 5 v_4(K) \right) = 0 \, .  \label{eq:Ito}
\end{equation}  
\end{proposition}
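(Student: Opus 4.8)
The plan is to reconstruct the obstruction \eqref{eq:Ito} from the theory of finite type (Ohtsuki) invariants of rational homology spheres together with their Dehn surgery formulas, which is the route taken in \cite{ItoTetsuya}. Here $v_4$ and $v_6$ are degree $4$ and degree $6$ finite type (Vassiliev) invariants of the knot $K$, and $w_4$ is a second, independent degree $4$ invariant; these are precisely the knot invariants that enter at low order in the perturbative (LMO, or Aarhus integral) expansion of a surgered manifold. Before computing anything I would reduce to a single comparison. By the theorem of Ni and Wu on cosmetic surgeries, a purely cosmetic pair of slopes must have the form $\{r,-r\}$, so it suffices to analyse the orientation-preserving homeomorphism $K(r)\cong K(-r)$ with $r=p/q$. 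Reversing the orientation of $S^3$ identifies $K(-r)$ with $-\,mK(r)$, the orientation reverse of the surgery on the mirror knot $mK$; hence for any finite type invariant $\lambda$ of degree $n$ the cosmetic hypothesis gives $\lambda(K(r))=(-1)^{n}\lambda(mK(r))$.

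The core of the proof is a surgery formula for each low-degree invariant. At degree one the invariant is the Casson-Walker invariant, and its surgery formula reproduces exactly the constraint $a_2(K)=0$ already recorded in Proposition~\ref{prop:a2} (Boyer-Lines \cite{BoyerLines}); being orientation-reversal odd, it is the first term of the pattern above. The genuinely new information sits at the next orientation-reversal-odd level, the degree three part $\lambda_3$ of the LMO invariant (the degree two part is orientation-reversal even and produces only a relation among odd-degree knot invariants, which is not what we want). Using the rational surgery formula for the LMO invariant, in the wheeled / Aarhus form of Bar-Natan--Garoufalidis--Rozansky and Lescop, I would express $\lambda_3(K(p/q))=F(p,q)+\sum_i c_i(p,q)\,w_i(K)$, where the $w_i$ are finite type invariants of $K$ of degree at most $6$, the coefficients $c_i(p,q)$ are explicit rational functions, and $F(p,q)$ is the knot-independent (lens space / framing) contribution.

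To extract the equation I would feed this into the relation $\lambda_3(K(r))=-\lambda_3(mK(r))$. Since mirroring sends $w_i(mK)=(-1)^{\deg w_i}w_i(K)$ while leaving $F$ unchanged, the odd-degree knot contributions cancel identically and what remains is $F(p,q)+\sum_{\deg w_i \text{ even}} c_i(p,q)\,w_i(K)=0$. One then checks that the knot-independent term $F$ drops out, so that only the three even-degree invariants $v_4$, $v_6$, $w_4$ survive; collecting their $(p,q)$-coefficients and clearing the common denominator $p^2$ yields a polynomial relation even in $p$ and in $q$, which is exactly \eqref{eq:Ito}.

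The main obstacle is the second step: deriving the degree three surgery formula with the correct rational coefficients. This demands careful bookkeeping of the framing correction and of the Dedekind-sum (lens space) terms in the Aarhus integral, and a precise accounting of the orientation-reversal and mirror signs, so that the knot-independent and odd-degree parts cancel and leave exactly the combinations $24w_4-5v_4$, $5v_4$, and $210v_6+5v_4$. The companion point, that no finite type invariant beyond $v_4$, $v_6$, $w_4$ contributes at this order, rests on Moussard's classification of finite type invariants of rational homology spheres, which would be the technical backbone of the whole argument.
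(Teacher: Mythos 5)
First, a point of comparison: the paper does not prove this proposition at all --- it is quoted verbatim from Ito \cite{ItoTetsuya}, so the only meaningful benchmark is Ito's own argument. Your outline does track that argument at the level of strategy: the reduction to the pair $\{r,-r\}$ via Ni--Wu, the rational surgery formula for the LMO invariant in the Bar-Natan--Lawrence form, and the parity bookkeeping under orientation reversal and mirroring, whereby the orientation-even degree-two part yields only constraints on odd-degree knot invariants (the Ichihara--Wu type conditions) while the orientation-odd degree-three part constrains $v_4$, $w_4$, $v_6$. Your choice of $\lambda_3$ over $\lambda_2$ is in fact forced and correct, since a degree-six invariant such as $v_6$ cannot appear in the degree-two part of a surgery formula.

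Nevertheless, as a proof the proposal has a genuine gap: everything that makes \eqref{eq:Ito} \emph{this} equation rather than \emph{some} equation is deferred. You never derive the degree-three surgery formula, so the coefficients $24$, $-5$, $5$, $210$ --- the entire quantitative content of the statement, and the part this paper actually uses --- are not established; your own text concedes this by labeling it ``bookkeeping.'' More seriously, the step ``one then checks that the knot-independent term $F$ drops out'' conceals a real difficulty: $F(p,q)$ contains the lens-space contribution, a Dedekind-sum-type quantity that is not polynomial in $(p,q)$, and since $\lambda_3$ is orientation-odd while $L(-p,q)$ is orientation-reversingly homeomorphic to $L(p,q)$, the naive comparison of $K(p/q)$ with $K(-p/q)$ \emph{doubles} this term rather than cancelling it (the same issue must already be confronted for the Casson--Walker invariant in \cite{BoyerLines}); showing it is absent from the final relation is a substantive computation, not an afterthought. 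Finally, Moussard's classification cannot be the ``technical backbone'' you want: at best it bounds which manifold invariants exist in low degree, but in Ito's derivation the list of contributing knot invariants and their coefficients fall directly out of the surgery formula itself, and no classification theorem will produce the numbers in \eqref{eq:Ito}. As written, this is a plausible plan for reconstructing Ito's proof, not a proof.
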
 
Here $v_4(K)$, $w_4(K)$, and $v_6(K)$ are certain canonical finite type invariants
of the knot $K$, 
which are determined by the Conway polynomial and the Jones polynomial. 
Since we have 
$a_2(K) = 0$,  $a_4(K) = 4n^4$, and $a_6(K) = 0$ by Lemma~\ref{lem:Conway}, 
these invariants are as follows (see \cite[Lemma 2.1]{ItoTetsuya}): 
\begin{itemize}
\item 
$v_4 (K) = -\frac12 a_4(K) = -2n^4$. 
\item 
$w_4(K) = 
\frac{1}{96} j_4(K) + \frac{3}{32} a_4(K) = \frac{1}{96} j_4(K) + \frac{3}{8} n^4$. 
\item 
$v_6(K) = -\frac{1}{3} n^4$. 
\end{itemize} 
Here $j_4(K)$ is the coefficient of $h^4$ in the Jones polynomial $V_K(e^h)$ of $K$, 
using the variable $t = e^h$. 
Thus equation \eqref{eq:Ito} reduces to 
\[ p^2\left( \frac{1}{4}j_4(K) + 19n^4 \right) - 10n^4 - 80 q^2 n^4 =0 \, . \]

Furthermore, since we may assume that $p/q = \pm 1, \pm 2$, 
we have $(p^2, q^2) = (1,1)$ or $(4,1)$. 
Thus, to prove Proposition~\ref{clm:FTI}, 
it suffices to show that 
\begin{align} 
j_4(K) \ne 14n^4 \ \text { and } \  j_4(K) \ne 284n^4 \ \text{ for } \ n > 0. \label{eq:j4}
\end{align} 

On the other hand, we have the following. 

\begin{lemma}\label{lem:Jones} 
For the two-bridge knot $K = C[4n,-2n,-2n,4n]$ with $n > 0$, 
\[ j_4(K) = -12n^4 \, . \]
\end{lemma}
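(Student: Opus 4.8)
The plan is to compute the Jones polynomial $V_K(t)$ of the two-bridge knot $K = C[4n,-2n,-2n,4n]$ directly, expand it in the variable $t = e^h$, and read off the coefficient $j_4(K)$ of $h^4$. Since $K$ is a two-bridge (rational) knot, its Jones polynomial is accessible through the Kauffman bracket of the standard alternating diagram, or more efficiently through a recursive formula for the bracket of rational tangles built from the continued fraction data $[4n,-2n,-2n,4n]$. The cleanest route is probably to set up the $2\times 2$ transfer matrices (in the Kauffman bracket skein module of the tangle) corresponding to the elementary twist regions of sizes $4n$, $-2n$, $-2n$, $4n$, multiply them, and close up the resulting tangle to obtain the bracket, then normalize by the writhe to get $V_K(t)$.

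First I would record the diagram of $K$ as a rational tangle and identify the elementary horizontal/vertical twist operations; each twist region contributes a known linear action on the two-dimensional skein module spanned by the $0$- and $\infty$-tangles, with entries that are monomials in $A$ (where $t = A^{-4}$). Next I would multiply the four transfer matrices in order, take the numerator/denominator closure appropriate to the knot's plat or trace closure, and extract a single Laurent polynomial in $A$; this is the unnormalized bracket $\langle K\rangle$. Then I would determine the writhe of the diagram (using the signs and counts of crossings already implicit in the signature computation earlier in the paper) to apply the standard normalization $V_K(t) = (-A^3)^{-w}\langle K\rangle$ with $t = A^{-4}$. Finally I would substitute $t = e^h$, Taylor-expand to order $h^4$, and collect the coefficient, which I expect to simplify to $-12n^4$.

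A useful sanity check at each stage is to cross-reference against the known Conway polynomial from Lemma~\ref{lem:Conway}: the Jones polynomial must be consistent with $a_2(K)=0$, $a_4(K)=4n^4$, $a_6(K)=0$, and the low-order $h$-expansion of $V_K(e^h)$ is governed by these finite type invariants (indeed $j_2(K)$ and the relation of $j_4$ to $a_4$ are constrained), so partial agreement validates the matrix product before the final extraction. One may also test small values such as $n=1$ against a known knot (here $C[4,-2,-2,4]$, a specific two-bridge knot whose Jones polynomial can be looked up or computed independently) to catch sign or normalization errors.

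The hard part will be controlling the bookkeeping in the transfer-matrix product: the entries are Laurent polynomials in $A$ whose degrees grow linearly in $n$, so the intermediate bracket is a genuinely $n$-dependent Laurent polynomial rather than a fixed expression, and one must expand it symbolically in $n$ before substituting $t=e^h$ and truncating at $h^4$. Keeping the $h$-expansion honest through the writhe normalization (an overall factor $(-A^3)^{-w}$ that also depends on $n$ through the crossing count) is where sign and degree errors are most likely to creep in; the main obstacle is organizing this computation so that the $n$-dependence of the $h^4$ coefficient collapses cleanly to the single monomial $-12n^4$ rather than leaving a spurious polynomial in $n$.
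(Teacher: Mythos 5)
Your strategy is sound and would succeed, but it differs from the paper's route in how the Jones polynomial is actually produced. The paper does not use the Kauffman bracket or transfer matrices at all: it applies the oriented skein relation of $V$ repeatedly at the crossings of the two middle twist regions, which peels $K = C[4n,-2n,-2n,4n]$ down to the two-bridge knot $C[4n,2n]$ plus correction terms involving the link $L_n = C[4n,-2n]\,\sharp\, T(2,-4n)$; the Jones polynomials of $T(2,\pm 4n)$, $C[4n,\pm 2n]$ are then computed by the same recursion in closed form, and multiplicativity under connected sum assembles everything into the compact expression
\[
V_K(t) = 1 + \frac{(1-t^{2n})(1-t^{-2n})(t^{2n}-t^{-2n})^2\,(t+1+t^{-1})}{(1+t)^2(1+t^{-1})^2}\,,
\]
after which the $h^4$ coefficient of $V_K(e^h)$ is extracted by Mathematica. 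What the paper's decomposition buys is precisely the thing you flag as the hard part: because the $n$-dependence is confined to the exponents $t^{\pm 2n}$, $t^{\pm 4n}$ in a fixed rational expression, the Taylor expansion at $t=e^h$ automatically yields coefficients that are polynomials in $n$, and the writhe normalization never has to be tracked separately. Your transfer-matrix product would give the same answer, but the intermediate bracket is an $n$-dependent Laurent polynomial with no closed form until the very end, and the $(-A^3)^{-w}$ normalization with $w$ depending on $n$ is an extra source of error that the oriented skein recursion avoids. Your proposed sanity checks (consistency with $a_2=0$, $a_4=4n^4$ from Lemma~\ref{lem:Conway}, and testing $n=1$) are good practice and entirely compatible with either route; note that even the published proof delegates the final coefficient extraction to a computer algebra system, so completing your plan would likewise require either that or a careful hand expansion of the closed form.
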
 
\begin{proof} 
Recall the skein relation of the Jones polynomial $V_K(t)$; 
\[ t^{-1} V_{ 
    \begin{minipage}{10pt}
        \begin{picture}(10,10)
            \put(0,0){\vector(1,1){10}}
            \qbezier(10,0)(10,0)(6.6667,3.3334)
            \qbezier(3.3334,6.6667)(0,10)(0,10)
            \put(0,10){\vector(-1,1){0}}
        \end{picture}
    \end{minipage}}
(t) -t V_{
    \begin{minipage}{10pt}
        \begin{picture}(10,10)
						\put(10,0){\vector(-1,1){10}}
            \qbezier(0,0)(0,0)(3.3334,3.3334)
            \qbezier(6.6667,6.6667)(10,10)(10,10)
            \put(10,10){\vector(1,1){0}}
        \end{picture}
    \end{minipage}}
(t) = (t^{\frac{1}{2}}-t^{-\frac{1}{2}}) V_{
    \begin{minipage}{10pt}
        \begin{picture}(10,10)
            \qbezier(0,0)(6.6667,5)(0,10)
            \qbezier(10,0)(3.3334,5)(10,10)
            \put(10,10){\vector(1,1){0}}
            \put(0,10){\vector(-1,1){0}}
        \end{picture}
    \end{minipage}}(t) \, .\] 
This is equivalent to one of the following: 
\begin{align} 
V_{ 
    \begin{minipage}{10pt}
        \begin{picture}(10,10)
            \put(0,0){\vector(1,1){10}}
            \qbezier(10,0)(10,0)(6.6667,3.3334)
            \qbezier(3.3334,6.6667)(0,10)(0,10)
            \put(0,10){\vector(-1,1){0}}
        \end{picture}
    \end{minipage}}
(t) &= t^2 V_{
    \begin{minipage}{10pt}
        \begin{picture}(10,10)
						\put(10,0){\vector(-1,1){10}}
            \qbezier(0,0)(0,0)(3.3334,3.3334)
            \qbezier(6.6667,6.6667)(10,10)(10,10)
            \put(10,10){\vector(1,1){0}}
        \end{picture}
    \end{minipage}}
(t)+  t (t^{\frac{1}{2}}-t^{-\frac{1}{2}}) V_{
    \begin{minipage}{10pt}
        \begin{picture}(10,10)
            \qbezier(0,0)(6.6667,5)(0,10)
            \qbezier(10,0)(3.3334,5)(10,10)
            \put(10,10){\vector(1,1){0}}
            \put(0,10){\vector(-1,1){0}}
        \end{picture}
    \end{minipage}}(t) \, , \label{eq:posi} \\ 
V_{
    \begin{minipage}{10pt}
        \begin{picture}(10,10)
						\put(10,0){\vector(-1,1){10}}
            \qbezier(0,0)(0,0)(3.3334,3.3334)
            \qbezier(6.6667,6.6667)(10,10)(10,10)
            \put(10,10){\vector(1,1){0}}
        \end{picture}
    \end{minipage}}
(t) &= t^{-2} V_{ 
    \begin{minipage}{10pt}
        \begin{picture}(10,10)
            \put(0,0){\vector(1,1){10}}
            \qbezier(10,0)(10,0)(6.6667,3.3334)
            \qbezier(3.3334,6.6667)(0,10)(0,10)
            \put(0,10){\vector(-1,1){0}}
        \end{picture}
    \end{minipage}}
(t)  - t^{-1} (t^{\frac{1}{2}}-t^{-\frac{1}{2}}) V_{
    \begin{minipage}{10pt}
        \begin{picture}(10,10)
            \qbezier(0,0)(6.6667,5)(0,10)
            \qbezier(10,0)(3.3334,5)(10,10)
            \put(10,10){\vector(1,1){0}}
            \put(0,10){\vector(-1,1){0}}
        \end{picture}
    \end{minipage}}(t) \, . \label{eq:nega}
\end{align}
Applying the skein relation \eqref{eq:posi} to the marked positive crossing 
in Figure~\ref{fig:c4224}, we have 
\[ V_K (t) = t^2 V_{C[4n, -2n, -2n+2, 4n]} (t) + t ( t^{\frac{1}{2}} - t^{-\frac{1}{2}}) V_{L_n}(t) \, , \]
where the link $L_n$ is the link of Figure~\ref{fig:Ln}, 
which is the connected sum of $C[4n, -2n]$ 
and the torus link $T(2,-4n)$ with coherent orientations. 
Repeating this procedure $n$ times, we have 
\begin{align*} 
V_K (t) 
&= t^{2n} V_{C[4n, 2n]} (t) 
+ t (1 + t^2 + \cdots + t^{2(n-1)}) ( t^{\frac{1}{2}} - t^{-\frac{1}{2}}) V_{L_n}(t) \\ 
&= t^{2n} V_{C[4n, 2n]} (t) 
+ t \dfrac{1-t^{2n}}{1-t^2} ( t^{\frac{1}{2}} - t^{-\frac{1}{2}}) V_{L_n}(t) \\ 
&= t^{2n} V_{C[4n, 2n]} (t) 
- t^{\frac12} \dfrac{1-t^{2n}}{1+t} V_{C[4n,-2n]}(t) V_{T(2,-4n)}(t)\, . 
\end{align*} 
The last equality relies on the Jones polynomial of a connected sum
being the product of those of the factors.

\begin{figure}[!htb]
\centering
\begin{overpic}[width=.8\textwidth]{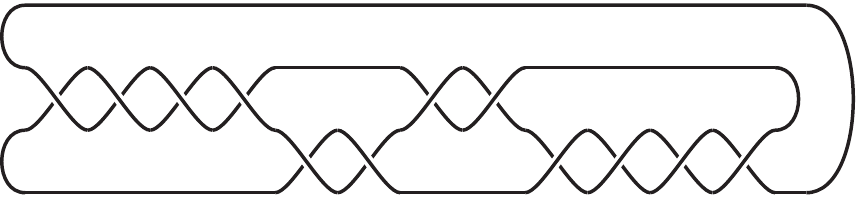}
\put(53.2,10){*} 
\put(5,15){\scalebox{7}[1.5]{\rotatebox{90}{\}}}} 
\put(64,8){\scalebox{7}[1.5]{\rotatebox{90}{\}}}} 
\put(49,15){\scalebox{3}[1.5]{\rotatebox{90}{\}}}} 
\put(34,8){\scalebox{3}[1.5]{\rotatebox{90}{\}}}} 
\put(15.7,18){$4n$}
\put(74.5,11){$4n$}
\put(52.8,18){$2n$}
\put(38,11){$2n$}
\end{overpic}
\caption{Apply the skein relation at the positive crossing marked by $*$.} 
\label{fig:c4224}
\end{figure}

\begin{figure}[!htb]
\centering
\begin{overpic}[width=.8\textwidth]{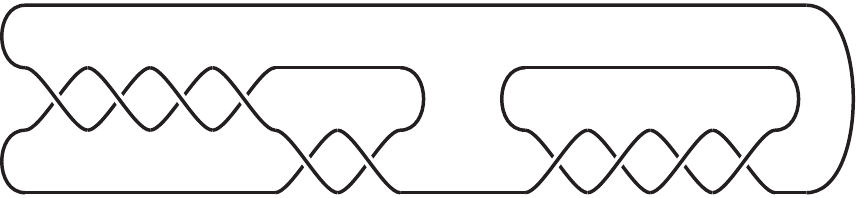}
\put(5,15){\scalebox{7}[1.5]{\rotatebox{90}{\}}}} 
\put(64,8){\scalebox{7}[1.5]{\rotatebox{90}{\}}}} 
\put(34,8){\scalebox{3}[1.5]{\rotatebox{90}{\}}}} 
\put(15.7,18){$4n$}
\put(74.5,11){$4n$}
\put(38,11){$2n$}
\end{overpic}
\caption{The link $L_n$ is the connected sum of the knot $C[4n,-2n]$ and $T(2,-4n)$.} 
\label{fig:Ln}
\end{figure}

For $T(2,-4n)$, applying the skein relation \eqref{eq:posi} $2n$ times, we have 
\begin{align*} 
V_{T(2,-4n)}(t) 
&= t^2 V_{T(2,-4n+2)}(t) + t (t^{\frac12} - t^{-\frac12}) \\ 
&= \cdots \\ 
&= t^{4n} V_{\circ \circ} + (1 + t^2 + \cdots + t^{4n-2}) t (t^{\frac12} - t^{-\frac12}) \\ 
&= t^{4n} (-t^{\frac12} - t^{-\frac12}) + \dfrac{1 - t^{4n}}{1-t^2} t (t^{\frac12} - t^{-\frac12}) \\ 
&= \dfrac{-t^{\frac12}}{1+t} \left( t^{4n}(1 + t^{-1})(1+t) + 1 - t^{4n} \right) \\ 
&= \dfrac{-t^{\frac12}}{1+t} \left( t^{4n}(t + 1 + t^{-1}) + 1 \right) \, , 
\end{align*} 
where $V_{\circ \circ} = -t^{\frac12} - t^{-\frac12}$ is 
the Jones polynomial of the trivial 2-component link. 

For $C[4n, -2n]$, applying the skein relation \eqref{eq:nega} $n$ times, we have 
\begin{align*} 
V_{C[4n, -2n]}(t) 
&= t^{-2} V_{C[4n, -2n+2]}(t) - t^{-1} (t^{\frac12} - t^{-\frac12}) V_{T(2,4n)} \\ 
&= \cdots \\ 
&= t^{-2n} - (1 + t^{-2} + \cdots + t^{-2(n-1)}) t^{-1} (t^{\frac12} - t^{-\frac12}) V_{T(2,4n)} \\ 
&= t^{-2n} - \dfrac{1 - t^{-2n}}{1-t^{-2}} t^{-1} (t^{\frac12} - t^{-\frac12}) 
\cdot \dfrac{-t^{-\frac12}}{1+t^{-1}} \left( t^{-4n}(t + 1 + t^{-1}) + 1 \right) \\ 
&= t^{-2n} + \dfrac{1 - t^{-2n}}{(1+t)(1 + t^{-1})}  \left( t^{-4n}(t + 1 + t^{-1}) + 1 \right) \, . 
\end{align*} 

Similarly, for $C[4n, 2n]$, applying the skein relation \eqref{eq:posi} $n$ times, we have 
\begin{align*} 
V_{C[4n, 2n]}(t) 
&= t^{2} V_{C[4n, 2n-2]}(t) + t (t^{\frac12} - t^{-\frac12}) V_{T(2,4n)} \\ 
&= \cdots \\ 
&= t^{2n} + \dfrac{1 - t^{2n}}{1-t^{2}} t (t^{\frac12} - t^{-\frac12}) 
\cdot \dfrac{-t^{-\frac12}}{1+t^{-1}} \left( t^{-4n}(t + 1 + t^{-1}) + 1 \right) \\ 
&= t^{2n} + \dfrac{1 - t^{2n}}{(1+t)(1 + t^{-1})}  \left( t^{-4n}(t + 1 + t^{-1}) + 1 \right) \, . 
\end{align*} 

Therefore we have 
\begin{align*} 
V_K (t) 
=& t^{2n} \left( t^{2n} + \dfrac{1 - t^{2n}}{(1+t)(1 + t^{-1})}  \left( t^{-4n}(t + 1 + t^{-1}) + 1 \right) \right) \\ 
&- t^{\frac12} \dfrac{1-t^{2n}}{1+t} 
\left(t^{-2n} + \dfrac{1 - t^{-2n}}{(1+t)(1 + t^{-1})}  \left( t^{-4n}(t + 1 + t^{-1}) + 1 \right) \right) 
 \dfrac{-t^{\frac12}}{1+t} \left( t^{4n}(t + 1 + t^{-1}) + 1 \right) \\ 
=&  
t^{4n} + \dfrac{1 - t^{2n}}{(1+t)(1 + t^{-1})}  \left( t^{-2n}(t + 1 + t^{-1}) + t^{2n} \right)  \\ 
&+ \dfrac{1-t^{2n}}{(1+t)(1+t^{-1})} \left( t^{2n}(t + 1 + t^{-1}) + t^{-2n}  \right) \\
&+ \dfrac{(1-t^{2n})(1 - t^{-2n})}{(1+t)^2(1+t^{-1})^2} \left( t^{-4n}(t + 1 + t^{-1}) + 1 \right) 
 \left( t^{4n}(t + 1 + t^{-1}) + 1 \right) \\ 
%=&  
%t^{4n} + \dfrac{1 - t^{2n}}{(1+t)(1 + t^{-1})} (t^{2n} + t^{-2n})(1+t)(1 + t^{-1}) \\ 
%&+ (1 - t^{2n}) (1 - t^{-2n}) \left(1 +  \dfrac{(t^{2 n} - t^{-2 n})^2}{(1 + t) (1 + t^{-1})} 
%+ \dfrac{(1 - t^{4 n})(1 - t^{-4 n})}{(1 + t)^2 (1 + t^{-1})^2} \right) \\ 
=&  
t^{2n} + t^{-2n} -1+ (1 - t^{2n}) (1 - t^{-2n}) \left(1 +  \dfrac{(t^{2 n} - t^{-2 n})^2}{(1 + t) (1 + t^{-1})} 
+ \dfrac{(1 - t^{4 n})(1 - t^{-4 n})}{(1 + t)^2 (1 + t^{-1})^2} \right) \\
=&  
1 + (1 - t^{2n}) (1 - t^{-2n}) \left(\dfrac{(t^{2 n} - t^{-2 n})^2}{(1 + t) (1 + t^{-1})} 
+ \dfrac{(1 - t^{4 n})(1 - t^{-4 n})}{(1 + t)^2 (1 + t^{-1})^2} \right) \\ 
=&  
1 + \dfrac{(1 - t^{2n}) (1 - t^{-2n}) (t^{2n} - t^{-2n})^2 (t+1+t^{-1})}{(1 + t)^2 (1 + t^{-1})^2} \, . 
\end{align*} 

%\left(t^{4n+1} + t^{4n} + t^{4n-1} + t^2 + 2t + 4 + 2t^{-1} + t^{-2} + t^{-4n+1} + t^{-4n} + t^{-4n-1} \right) 

Using Mathematica \cite{Mathematica}, we verify that $j_4(K) = -12 n^4$. 
%The file of Mathematica is available at \cite{}. %, see also README file in \cite{}.
%(In the case where $n=1$, 
%we can check $j_4 = -12$ by hand. 
%However I have not calculated by hand in the general cases.)
\end{proof}

Lemma~\ref{lem:Jones} guarantees that condition \eqref{eq:j4} holds, 
which completes the proof of Proposition~\ref{clm:FTI} and with it the proof of Theorem~\ref{Thm2-bridge}.

\section{Alternating fibered knots}

In this section we prove Theorem~\ref{ThmFibered}. 
By Lemma~\ref{LemAlt}, for an alternating fibered knot to admit purely cosmetic surgeries, it must have signature zero  and genus two. 

Due to Stoimenow's result \cite[Proposition 3.2]{Stoimenow-g2}, a prime alternating genus two knot has zero signature if and only if its diagram can be obtained from a diagram of $6_3$, $7_7$, $8_{12}$, $9_{41}$, $10_{58}$ or $12_{1202}$ by (repeated) $\bar{t'_2}$ moves. 
See \cite[Definition 2.2]{Stoimenow-g2} for the definition of the $\bar{t'_2}$ move. 
In the same way, non-prime alternating knots are obtained from $3_1 \sharp 3_1^*$, $4_1 \sharp 4_1$ by \cite[Corollary 2.3]{Stoimenow} together with \cite[Theorem 1(b)]{Menasco}.

On the other hand, an alternating knot is fibered if and only if the Alexander polynomial is monic, as proved by Murasugi \cite{Murasugi63}, using a result of Neuwirth \cite{Neuwirth}. 
However, as shown by the second author \cite[Corollary 4.6]{Jong}, if one applies a $\bar{t'_2}$ move to an alternating diagram, then (the absolute value of) the coefficients of the Alexander polynomial must increase. 

Thus, alternating fibered knots of genus 2 are exactly the fibered knots contained in $6_3$, $7_7$, $8_{12}$, $9_{41}$, $10_{58}$ or $12_{1202}$, or $3_1 \sharp 3_1^*$, $4_1 \sharp 4_1$. 
Here $\sharp$ denotes the connected sum and $*$ the mirror image.

In fact, the fibered ones are $6_3$, $7_7$, $8_{12}$, $3_1 \sharp 3_1^*$, and $4_1 \sharp 4_1$. 
However, the Alexander polynomials of these knots do not satisfy Hanselman's condition in \cite[Theorem 3]{Hanselman}. 
Also see \cite[Corollary 5.2]{Jong}. 

This completes the proof of Theorem~\ref{ThmFibered}.

\section*{Acknowledgements}
%The authors would like to thank Hitoshi Murakami and Ilya Kofman for giving them information about Remark~\ref{rem1}, and also Tatsuya Tsukamoto for useful conversation about it. 
The authors would like to thank Hitoshi Murakami and Ilya Kofman for giving them information about Remark~\ref{rem1}, and also Tatsuya Tsukamoto for a useful conversation about it. 
Ichihara is partially supported by JSPS KAKENHI Grant Number 18K03287. 
Jong is partially supported by JSPS KAKENHI Grant Number 19K03483. 
Saito is partially supported by JSPS KAKENHI Grant Number 15K04869.

\end{document}